\documentclass[12pt]{amsart}

\calclayout

\usepackage[dvipsnames]{xcolor}
\usepackage{caption}            
\usepackage{subcaption}         
\usepackage{multirow}           
\usepackage{listings}           
\usepackage{url}      
\usepackage[lite]{amsrefs}
\usepackage{amssymb}
\usepackage{float}    
\usepackage[all,cmtip]{xy}

\usepackage{mathrsfs}
\usepackage{tikz-cd}
\usepackage{tabularx}
\usepackage{booktabs}
\usepackage[labelfont=bf,format=plain,justification=raggedright,singlelinecheck=false]{caption}
\usepackage{bookmark}
\usepackage{hyperref}
\usepackage[color=lightgray]{todonotes}
\newcommand{\jt}{\tilde{\jmath}}
\newcommand{\ti}{\tilde{\imath}}
\newcommand{\Fm}{\mathcal{F}_m}
\newcommand{\Fnm}{\mathcal{F}_{n,m}}
\newcommand{\lcm}{\mathrm{lcm}}
\newcommand{\fl}[1]{\left\lfloor #1 \right\rfloor}
\newcommand{\cl}[1]{\left\lceil #1 \right\rceil}

\numberwithin{equation}{section}

\definecolor{airforceblue}{rgb}{0.36, 0.54, 0.66}
\definecolor{amber1}{rgb}{1.0, 0.49, 0.0}
\definecolor{amber}{rgb}{1.0, 0.75, 0.0}
\definecolor{antiquefuchsia}{rgb}{0.57, 0.36, 0.51}

\theoremstyle{plain}
\newtheorem{theorem}[equation]{Theorem}
\newtheorem{corollary}[equation]{Corollary}
\newtheorem{lemma}[equation]{Lemma}
\newtheorem{proposition}[equation]{Proposition}

\theoremstyle{definition}
\newtheorem{defn}[equation]{Definition}

\theoremstyle{remark}
\newtheorem{remark}[equation]{Remark}

\begin{document}

\title[On the $a$-number of Fermat type function fields and some of their subfields]{On the $a$-number of Fermat type function fields and some of their subfields}

\author{Marie Frank vom Braucke} \address{Department of Applied Mathematics and Computer Science, Technical University of Denmark, Kongens Lyngby 2800, Denmark}  \email{mfvbr@dtu.dk} 

\begin{abstract}
    The $a$-number of a function field $F$ is the dimension of the kernel of the Cartier operator acting on the holomorphic differentials of $F$. 
    We compute the action of the Cartier operator on the holomorphic differentials on the Hermitian function field $\mathcal{H}$, function fields of Fermat type $\Fnm$ and the Hurwitz function field $\mathcal{H}_n$. 
    From this, we obtain various results about the $a$-number of these function fields, including a generalisation of the results on the Fermat function field from Kodama and Washio~\cite{KW}.
\end{abstract}

\maketitle

\vspace{0.5cm}\noindent {\em Keywords}: Maximal function field, a-number, Cartier operator

\vspace{0.2cm}\noindent{\em MSC}: Primary: 11G20. Secondary: 11R58, 14H05.


\vspace{0.2cm}\noindent
 
\section{Introduction}
\label{sec:introduction}
Let $\mathbb{F}_{q^2}$ be a finite field of cardinality $q^2$ with $q$ a prime power.
A function field $F/\mathbb{F}_{q^2}$ with genus $g$ is called $\mathbb{F}_{q^2}$-maximal if its number of places of degree one attains the Hasse-Weil upper bound, 
\begin{align*}
    N(F)\leq q^2 + 1 + 2gq,
\end{align*}
where $N(F)$ is the number of places of degree one (also called the rational places of $F/\mathbb{F}_{q^2}$). 
Maximal function fields are widely studied in the literature, both due to theoretical interest and because of their application in coding theory and cryptography, originally due to Goppa~\cite{Goppa}.
A function field is classified by its birational invariants, and some of these influence how likely an algebraic function field is to give rise to good, long error-correcting codes. Some desirable properties of algebraic function fields in this perspective are large genera and many rational places of degree one. These values are some of the most studied birational invariants of a function field along with the $p$-rank and the automorphism group. 
The $p$-rank is the dimension of the span of the holomorphic differentials of the function field that are fixed by the Cartier operator $\mathcal{C}$, and it is connected to the size of the automorphism group (\cites{Nak, GK}), where a large automorphism group often implies that the corresponding function field has $p$-rank 0. 

A less studied birational invariant is the $a$-number, which can be defined as the dimension of the kernel of $\mathcal{C}$. 
It has been determined for infinite families of Fermat and Hurwitz function fields in~\cite{MoS} and for Fermat function fields of prime degree in~\cite{Gon}. We will present various results on the $a$-number of the Fermat type and Hurwitz function fields, and since the regular Fermat function fields also fall within this class of function fields, we recover and generalise the results obtained by Kodama and Washio in~\cite{KW}. In fact we prove that the relations found in~\cite{KW} for the classical Fermat function fields are true for all the classes of function fields studied in this paper. 

The paper is divided into two main parts after the preliminary section.
In Section~\ref{sec:C_on_ff}, we give explicit expressions for the Cartier operator on holomorphic differentials on the Hermitian function field, which we extend to results on maximal function fields of Fermat type.
In Section~\ref{sec:misc_results}, we will show various relations between $a$-numbers of Fermat type and Hurwitz function fields of different characteristic. 

\section{Preliminaries}
\label{sec:preliminaries}
In this section, we will present the preliminaries needed for the rest of the paper. The first subsection is dedicated to presenting the function fields that we will treat, and in the second we will recall the definition of, and some results on, the Cartier operator and holomorphic differentials. Most of the results on the Cartier operator was presented in 1957 by Cartier in~\cite{Cartier}, however the reader is referred to~\cite{MoS} for a more recent source. 
Before that however, we will bring a short theorem that will be of much use later.
\begin{theorem}[Lucas's Theorem~\cite{Lucas}]\label{thm:Lucas}
    Let $p$ be a prime, and $a,b$ nonnegative integers with the $p$-ary expansions $a=a_0+a_1p + ... + a_kp^k$ and $b=b_0+b_1p + ... + b_kp^k$. Then
    \begin{align*}
        \binom{a}{b} \equiv \prod_{i=0}^{k}\binom{a_i}{b_i} \mod{p}.
    \end{align*}
\end{theorem}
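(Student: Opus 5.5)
The plan is to compare coefficients in the polynomial ring $\mathbb{F}_p[x]$, using the Frobenius identity $(1+x)^p = 1 + x^p$ in characteristic $p$.

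First, since the binomial coefficients $\binom{p}{j}$ with $0<j<p$ are divisible by $p$, an induction on $i$ gives
\begin{align*}
(1+x)^{p^i} = 1 + x^{p^i} \quad \text{in } \mathbb{F}_p[x].
\end{align*}
Next, write $a=\sum_{i=0}^{k} a_i p^i$ and factor
\begin{align*}
(1+x)^a = \prod_{i=0}^{k} \bigl((1+x)^{p^i}\bigr)^{a_i} = \prod_{i=0}^{k} \bigl(1+x^{p^i}\bigr)^{a_i} = \prod_{i=0}^{k} \sum_{c_i=0}^{a_i} \binom{a_i}{c_i} x^{c_i p^i}.
\end{align*}

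Now compare the coefficient of $x^b$ on both sides. On the left it is $\binom{a}{b} \bmod p$. On the right, expanding the product produces monomials $x^{\sum_i c_i p^i}$ with $0\le c_i\le a_i\le p-1$. By uniqueness of base-$p$ expansions, the exponent equals $b$ exactly when $c_i=b_i$ for every $i$. Hence the coefficient of $x^b$ on the right is $\prod_i \binom{a_i}{b_i}$.

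This includes the degenerate case where some $b_i>a_i$. Then no choice of the $c_i$ yields the exponent $b$, so the coefficient is $0$, which agrees with $\binom{a_i}{b_i}=0$. If $b>a$, we pad both expansions to a common length $k$, and some digit necessarily satisfies $b_i>a_i$, so this case is covered as well.

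The only delicate point is this uniqueness-of-digits step: the constraint $c_i\le a_i<p$ is what guarantees that no carries occur, so each exponent $\sum_i c_i p^i$ is already a genuine base-$p$ expansion. Everything else is routine.
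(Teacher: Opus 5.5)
The paper does not prove this statement. It quotes Lucas's theorem as a classical result, cites \cite{Lucas}, and uses it as a black box in the proof of Theorem~\ref{thm:C(omega)}.

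Your argument is the standard generating-function proof, and it is correct. It rests on three steps:
\begin{itemize}
\item the Frobenius identity $(1+x)^{p^i}=1+x^{p^i}$ in $\mathbb{F}_p[x]$;
\item the factorisation $(1+x)^a=\prod_i(1+x^{p^i})^{a_i}$;
\item the observation that $0\le c_i\le a_i\le p-1$ makes each exponent $\sum_i c_ip^i$ a genuine base-$p$ expansion, so at most one tuple contributes to $x^b$.
\end{itemize}
Your handling of a digit with $b_i>a_i$, including the case $b>a$ after padding, is also right.

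Beyond the citation, your route gives a self-contained proof that extends directly to the form the paper actually uses. That form is $\binom{Ap+r}{Bp+s}\equiv\binom{A}{B}\binom{r}{s}\pmod p$ for $0\le r,s\le p-1$ and arbitrary $A,B\ge 0$. It appears, for instance, when the paper splits $\binom{p-1+j}{\alpha p+(\lceil i/p\rceil p-i)}$ into a lowest-digit factor times $\binom{\lceil j/p\rceil}{\alpha}$. To get it, write $(1+x)^{Ap+r}=(1+x^p)^A(1+x)^r$ and compare coefficients of $x^{Bp+s}$. Division with remainder then forces the unique contributing term to be $x^{Bp}\cdot x^{s}$.
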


\subsection{Fermat type and Hurwitz function fields}
Let $p$ be a prime and $q=p^e$ for a positive integer $e$. 
Then the Hermitian function field $\mathcal{H}=\mathbb{F}_{q^2}(X,Y)$ is given by 
\begin{align*}
    X^{q+1}+Y^{q+1}+1=0.
\end{align*}
$\mathcal{H}$ has genus $\frac{q(q-1)}{2}$ and is $\mathbb{F}_{q^2}$-maximal.
The Fermat type function field $\mathcal{F}_{n,m}$ is given by
\begin{align*}
    U^n+V^m+1=0,
\end{align*}
with $p\nmid n$ and $p\nmid m$. From~\cite[Example 6.3.3]{Sti}, it follows that $\Fnm$ has genus $g(\Fnm)=\frac{(n-1)(m-1)+1-\gcd(m,n)}{2}$, and in~\cite[Theorem 5]{TT1} it is shown that $\Fnm$ is $\mathbb{F}_{q^2}$-maximal if and only if both $n,m$ divide $q+1$. 
Assume this is the case and denote by $\tilde{m}=\frac{q+1}{m}$ and $\tilde{n}=\frac{q+1}{n}$. Set $U=X^{\tilde{n}}$ and $V=Y^{\tilde{m}}$, then $\Fnm=\mathbb{F}_{q^2}(U,V)$ is a subfield of the Hermitian function field as $U^n + V^m + 1 = X^{q+1} + Y^{q+1} + 1 = 0$. 
$\Fnm$ is a generalization of the Fermat function field $\Fm$ which is obtained by setting $n=m$ and is amongst the most studied function fields in the literature.
The last function field we will treat is the Hurwitz function field $\mathcal{H}_n=\mathbb{F}_{q^2}(S,T)$ with defining equation
\begin{align*}
    S^nT+T^n+S=0,
\end{align*}
and, since it is non-singular, its genus is $g(\mathcal{H}_n)=\frac{n(n-1)}{2}$. 
$\mathcal{H}_n$ is $\mathbb{F}_{q^2}$-maximal exactly when $q+1\equiv 0 \mod (n^2-n+1)$ by~\cite[Theorem 3.1]{AKT}, and if we denote by $N=n^2-n+1$, then the Fermat function field $\mathcal{F}_N=\mathbb{F}_{q^2}(U,V)$ defined by $U^N+V^N+1=0$ is an extension of $\mathcal{H}_n$. Here we have $S=U^n/V$, $T=UV^{n-1}$ (\cite[Lemma 3.2]{AKT}). 

\subsection{The Cartier operator and holomorphic differentials}
Let $F/K$ be a function field over an algebraically closed field of characteristic $p>0$ of genus $g$, and let $t$ be separating element of $F/K$.
We denote by $\Delta(F/K)$ the $F$-module of differentials of $F/K$.
Every differential $\omega\in\Delta(F/K)$ can be written as $\omega=xdt$ with $x\in F$, in particular $\omega$ has unique representation
\begin{align*}
    \omega=(x_0^p + x_1^pt + \cdots + x_{p-1}^p t^{p-1})dt,
\end{align*}
with $x_i\in F$ for $i=0,\ldots p-1$.
The Cartier operator $\mathcal{C}: \Delta(F/K)\to \Delta(F/K)$ is defined by $\mathcal{C}(\omega)=x_{p-1}dt$. Note that $\mathcal{C}$ is independent of the choice of $t$, a result by Tate in~\cite{Tate}.
The Cartier operator satisfies the following properties:
\begin{itemize}
    \item $\mathcal{C}$ is $p^{-1}$-linear, i.e. $\mathcal{C}(\omega+\omega')=\mathcal{C}(\omega)+\mathcal{C}(\omega')$ and $\mathcal{C}(x^p\omega)=x\mathcal{C}(\omega)$,
    \item $\mathcal{C}$ is surjective,
    \item $\mathcal{C}(\omega)=0$ if and only if $\omega$ is exact,
    \item $\mathcal{C}(dx/x)=dx/x$ for all $x\neq 0$,
\end{itemize}
for $\omega,\omega'\in\Delta(F/K)$ and $x\in F$. A differential $\omega$ is exact if it is of the form $dx$ for some $x\in F/K$. 
We call a differential $\omega$ holomorphic (or of the first kind) if $\text{div}(\omega)$ is effective and denote by $D_1$ the set of holomorphic differentials. Then we have that $\mathcal{C}(D_1)\subseteq D_1$, and $\mathcal{C}^g(D_1)= \mathcal{C}^{g+k}(D_1)$ for any $k\in\mathbb{N}$. 
We define the $p$-rank (or Hasse-Witt invariant) of $F/K$ as $\gamma(F) = \dim_K(\mathcal{C}^g(D_1))$, which is an invariant of the function field.
We have $0\leq \gamma\leq g$, where $F/K$ is called ordinary if $\gamma=g$. An $\mathbb{F}_{q^2}$-maximal function field has $\gamma=0$. 
Another invariant in relation to $\mathcal{C}$ is the $a$-number, $a=\text{dim}_K \ker \mathcal{C}$. The $a$-number is related to the $p$-rank by $0\leq a+\gamma \leq g$. 
The following theorem is from~\cite[Theorem 3.3]{GT}.
\begin{theorem}\label{thm:C^n=0}
    For an $\mathbb{F}_{q^2}$-maximal (or minimal) function field with $q=p^n$, we have $\mathcal{C}^n=0$. 
\end{theorem}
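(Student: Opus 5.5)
The plan is to work on the Jacobian $J$ of $F$, which is simple to set up and lets Frobenius and Verschiebung do the work. Write $\ell=q^2$ and let $\pi$ be the $\mathbb{F}_{q^2}$-Frobenius endomorphism of $J$.

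\emph{Step 1: $\pi=-q$ (resp.\ $\pi=q$).} Maximality means $N(F)=q^2+1+2gq$. For an $\mathbb{F}_{q^2}$-maximal curve, every eigenvalue of Frobenius on the $2g$-dimensional Tate module has absolute value $q$. The point count forces all $2g$ of them to equal $-q$, so $L_F(t)=(1+qt)^{2g}$. The Frobenius endomorphism $\pi$ acts semisimply on $T_\ell(J)\otimes\mathbb{Q}_\ell$ (Tate), so $\pi=-q$ in $\mathrm{End}(J)$. In the minimal case the same argument gives $\pi=q$.

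\emph{Step 2: factor $\pi$ through Frobenius and Verschiebung.} Let $F_p$ and $V_p$ be the relative $p$-Frobenius and Verschiebung isogenies, so that $F_pV_p=V_pF_p=p$. Then $\pi=F_p^{2n}$, where $F_p^{2n}$ is the composite of $2n$ relative Frobenii, which is an endomorphism since $J$ is defined over $\mathbb{F}_{q^2}$ with $q^2=p^{2n}$. Hence
\[
F_p^{2n}=\pm p^n=\pm F_p^{n}V_p^{n}.
\]
Frobenius is an epimorphism of group schemes, and on the level of Dieudonné modules $F$ is injective. Cancelling $F_p^n$ therefore gives $F_p^n=\pm V_p^n$ up to the appropriate Frobenius twist.

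\emph{Step 3: pass to the Cartier operator.} The Cartier operator on $H^0(F,\Omega^1)$ is identified with the action of $V$ on the Dieudonné module $\mathbb{D}(J[p])\supset H^0(\Omega^1)=V\mathbb{D}$ (Oda's theorem), or dually with $F$ on $H^1(\mathcal{O})$. In $\mathbb{D}=\mathbb{D}(J[p])$ we have $FV=VF=0$ and $F^n=\pm V^n$. On $H^0(\Omega^1)=\ker F=\mathrm{im}\,V$, any $\omega=Vx$ satisfies $V^n\omega=V^{n+1}x$. Since $n\ge1$, we get
\[
V^{n+1}=V\cdot V^{n}=\pm V F^{n}=0.
\]
This only yields $\mathcal{C}^{n+1}=0$, so an off-by-one must be removed. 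I would instead use $V^n=\pm F^n$ directly on $\omega\in\ker F$: then $V^n\omega=\pm F^n\omega=0$, so $\mathcal{C}^n=0$ on $H^0(\Omega^1)$.

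\emph{Main obstacle.} The hard step is making Step 2 rigorous. Identifying $\pi$ with the $2n$-fold Frobenius and cancelling Frobenius must be done over $\mathbb{F}_{q^2}$, where the semilinear twists matter. The identification $\ker F=H^0(\Omega^1)$ and $V|_{H^0(\Omega^1)}=\mathcal{C}$ from Oda also need care. I would cite Oda and Tate for these facts rather than reprove them. Alternatively, I would try the more elementary route: verify $F^n=\pm V^n$ on $H^1_{dR}$ via the trace of $\pi^*$ on crystalline cohomology and the Cartier–Manin matrix.
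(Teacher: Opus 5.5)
The paper gives no proof of this statement. It is quoted from Garcia--Tafazolian [GT, Theorem 3.3] as background, so there is no internal argument to compare against.

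Your proposal is a correct, self-contained proof by the classical Dieudonn\'e-module route. For $q=p$ this is Ekedahl's argument that maximal curves over $\mathbb{F}_{p^2}$ are superspecial. The chain is: maximality forces $\pi=-q$ on the Jacobian. Cancelling Frobenius then gives $F^n=-V^n$ (resp.\ $F^n=V^n$ in the minimal case). Finally, Oda's identification $H^0(\Omega^1)=\ker F\subset H^1_{dR}\cong\mathbb{D}(J[p])$, with $\mathcal{C}=V|_{H^0(\Omega^1)}$, yields $\mathcal{C}^n\omega=V^n\omega=\mp F^n\omega=0$. Compared with the citation, your route makes the mechanism visible and proves slightly more. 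The identity $F^n=\mp V^n$ holds on all of $H^1_{dR}$, and $\mathcal{C}^n=0$ is just its restriction to the Hodge subspace.

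Three things to fix in the write-up.

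\emph{The off-by-one worry is unfounded.} Once $V^{n+1}=0$ on $\mathbb{D}$, every $\omega=Vx\in H^0(\Omega^1)$ satisfies $V^n\omega=V^{n+1}x=0$. That is already $\mathcal{C}^n=0$, not merely $\mathcal{C}^{n+1}=0$. Your ``direct'' computation afterwards is the same fact.

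\emph{Where the cancellation takes place.} $F$ is injective on the Dieudonn\'e module of the $p$-divisible group $J[p^\infty]$, which is free over $W(\mathbb{F}_{q^2})$ with $FV=VF=p$. It is not injective on $\mathbb{D}(J[p])$, where $\ker F=H^0(\Omega^1)$ is $g$-dimensional. So you must cancel either on $J$ itself, by right-cancelling the epimorphism $F^{(n)}$ in $F^{(n)}\circ F^{(n)}=\mp V^{(n)}\circ F^{(n)}$ using $[p^n]=V^{(n)}\circ F^{(n)}$, or on $\mathbb{D}(J[p^\infty])$. Only afterwards restrict to $J[p]$, i.e.\ reduce mod $p$. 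The semilinear twists are harmless over $\mathbb{F}_{q^2}$: $F^n$ is $\sigma^n$-linear, $V^n$ is $\sigma^{-n}$-linear, and $\sigma^n=\sigma^{-n}$ on $\mathbb{F}_{p^{2n}}$. The vanishing of $\mathcal{C}^n$ then passes to the algebraically closed constant field by base change, since $\mathcal{C}$ commutes with constant field extension.

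\emph{A notational slip.} You set $\ell=q^2$ but then use the Tate module $T_\ell(J)$, which must be taken at a prime $\ell\neq p$.
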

Let $f$ be the defining equation for $F/K=K(x,y)/K$ with $d=\deg f$, and assume that $f$ is non-singular. 
Due to a theorem from Gorenstein~\cite[Theorem 12]{Go}, we then have that $D_1 = \left\{ \frac{h(x,y)}{f_y(x,y)}dx \mid \deg h \leq d-3\right\}$, in particular we have a basis for the space of holomorphic differentials given by
\begin{align*}
    \left\{ \frac{x^iy^j}{f_y(x,y)}dx \mid i+j \leq d-3\right\}.
\end{align*}
Note that in fact this agrees with
\begin{align*}
    \dim_K(D_1)=g = \frac{(d-1)(d-2)}{2}=\binom{d-1}{2}=\#(\text{monomials of degree $\leq d-3$ in 2 variables).}
\end{align*}
Define the differential operator $\nabla=\frac{\partial^{2p-2}}{\partial x^{p-1}\partial y^{p-1}}$. Then the following theorem states an explicit formula to compute the action of the Cartier operator on a holomorphic differential which was found by Stöhr and Voloch in~\cite{SV}. 
\begin{theorem}\label{thm:S-V}
    With the above assumptions, we have that for every $h\in F/K$,
    \begin{align*}
        \mathcal{C}\left(\frac{hdx}{f_y}\right) = \left(\nabla(f^{p-1}h)\right)^{1/p}\frac{dx}{f_y}.
    \end{align*}
\end{theorem}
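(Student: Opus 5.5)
The plan is to reduce the Cartier operator to the standard local formula via a judicious separating element, and then to check that this formula is independent of all choices.

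\textbf{Setting up the pieces.} Write $\omega=\frac{h\,dx}{f_y}$. Use $x$ as the separating element; this is legitimate because $f$ is non-singular, and we may assume $f_y\neq 0$. The defining property of $\mathcal{C}$, together with $p^{-1}$-linearity, reduces everything to computing $\mathcal{C}(u\,dx)$ for $u=h/f_y$. If we expand
\begin{align*}
u=\sum_{i=0}^{p-1} u_i^p x^i,
\end{align*}
then $\mathcal{C}(u\,dx)=u_{p-1}\,dx$. Equivalently, $u_{p-1}^p=-\frac{d^{p-1}u}{dx^{p-1}}$, since $\frac{d^{p-1}}{dx^{p-1}}x^{p-1}=(p-1)!=-1$ by Wilson's theorem, while it kills $x^i$ for $i<p-1$. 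The task is therefore to show
\begin{align*}
-\frac{d^{p-1}}{dx^{p-1}}\!\left(\frac{h}{f_y}\right)=\frac{\nabla(f^{p-1}h)}{f_y^{\,p}}
\end{align*}
in $F$, where $\nabla$ is applied to the polynomial $f^{p-1}h\in K[x,y]$ and the result is then evaluated on the curve.

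\textbf{Proving the identity: residues.} I would argue via residues, using the duality $\operatorname{res}(\mathcal{C}(\omega))^p=\operatorname{res}(\omega)$. Pairing both sides against $z^p$ for $z$ ranging over $F$ determines a differential uniquely, because the residue pairing is nondegenerate. So it suffices to check, for all $z$ and all places $P$,
\begin{align*}
\operatorname{res}_P\!\left(z^p\,\frac{\nabla(f^{p-1}h)}{f_y^{\,p}}\,dx\right)=\operatorname{res}_P\!\left(z^p\frac{h}{f_y}dx\right)^{\!p}.
\end{align*}

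\textbf{Proving the identity: direct monomial computation (preferred).} Alternatively, and this is the route I would actually take, work directly with $K(x)[y]/(f)$. Since everything is $p^{-1}$-linear in $h$ over $K[x^p,y^p]$, it suffices to treat monomials $h=x^ay^b$. Expand $f^{p-1}h$ as $\sum c_{ij}x^iy^j$. Then $\nabla$ picks out exactly the terms with $i\equiv j\equiv p-1 \pmod p$, up to the factor $((p-1)!)^2=1$, which gives $\sum c_{kp+p-1,\,lp+p-1}\,x^{kp}y^{lp}$.

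On the other side, use $\frac{h}{f_y}=\frac{h f^{p-1}}{f_y f^{p-1}}$ and the classical fact that in the completion at a point, $y$ is a power series in $x$, with $f_y\,dy=-f_x\,dx$. The key lemma is the following. For a polynomial $G=\sum c_{ij}x^iy^j$, consider the double residue of $G\,\frac{dx\,dy}{f^p}$ in two variables and pass to the curve. Taking the residue along $f=0$ gives $\frac{1}{(p-1)!}\partial_y^{p-1}$-type derivatives, and the $p$-th power structure of $f^p$ lets one pull $f_y^{\,p}$ out. Concretely, I would prove
\begin{align*}
\mathcal{C}\!\left(\frac{G}{f^{p}}\,dx\wedge dy\right)=\frac{\nabla G}{f}\,dx\wedge dy
\end{align*}
for the two-dimensional Cartier operator on $K(x,y)$, which is a direct monomial check. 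I would then take Poincaré residues along $f=0$, using that the Cartier operator commutes with residue maps.

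\textbf{Main obstacle.} The hard step is justifying this compatibility of the Cartier operator with the Poincaré residue $\frac{G\,dx\wedge dy}{f}\mapsto \frac{G\,dx}{f_y}\big|_{f=0}$. I would first try to handle it by a local computation in coordinates $(x,f)$ near a smooth point of the curve, where the residue is simply the coefficient of $df/f$ and $\mathcal{C}(df/f)=df/f$.
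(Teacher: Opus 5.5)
The paper does not prove this statement. It quotes it from St\"ohr--Voloch, so your argument has to stand on its own. Your preferred route is sound and is the more explanatory one. In the two-variable field $L=K(x,y)$ you have $\frac{h}{f}\,dx\wedge dy=\frac{f^{p-1}h}{f^p}\,dx\wedge dy$. The top-degree Cartier operator in the $p$-basis $(x,y)$ is exactly $G\,dx\wedge dy\mapsto(\nabla G)^{1/p}dx\wedge dy$, using $((p-1)!)^2=1$. The formula is then the restriction to $C\colon f=0$ of the resulting identity, since $dx\wedge dy=f_y^{-1}dx\wedge df$. This also shows that non-singularity of $f$ plays no role. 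Only irreducibility of $f$, $f_y\notin(f)$ (which is what makes $x$ separating), and $h$ regular along $C$ are used. The reduction to monomials $h=x^ay^b$ is unnecessary.

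Several things need repair.

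\textbf{Misstatements.} The displayed lemma should read $\mathcal{C}\bigl(\frac{G}{f^p}dx\wedge dy\bigr)=\frac{(\nabla G)^{1/p}}{f}\,dx\wedge dy$; you dropped the $p$-th root. The residue-pairing paragraph should be deleted. Its left-hand side is the residue of some $g^p\,dx=d(g^px)$, hence always $0$. The correct criterion would be $\mathrm{res}_P(z\eta)^p=\mathrm{res}_P(z^p\omega)$, which is no easier to check. Likewise, the remarks about $\frac{hf^{p-1}}{f_yf^{p-1}}$ on the curve and ``$\partial_y^{p-1}$-type derivatives'' are confused. One applies $\mathcal{C}$ first, so only a simple pole is ever residued.

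\textbf{The main obstacle.} This is the substantive step, and $\mathcal{C}(df/f)=df/f$ alone does not settle it. The form $\frac{h}{f_y}dx$ is not closed on the surface, so $\mathcal{C}\bigl(\frac{h}{f_y}dx\wedge\frac{df}{f}\bigr)$ cannot be split multiplicatively. Two things must be made explicit.

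(i) The Cartier operator on $2$-forms must not depend on the $p$-basis, since you compute it in $(x,y)$ but read off the restriction in $(x,f)$. Either cite Cartier, or check it directly. Both versions are $p^{-1}$-linear with kernel exactly the exact $2$-forms, which have codimension one over $L^p$, so they differ by a factor. That factor is $1$ because $\nabla\bigl(\frac{f_y}{xf}\bigr)=\bigl(\frac{f_y}{xf}\bigr)^p$. This follows from $\partial_y^{p-1}(f_y/f)=-(f_y/f)^p$ (Jacobson's identity, using $\partial_y^p=0$) and $\partial_x^{p-1}(x^{-1})=-x^{-p}$.

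(ii) Do the local computation at the generic point of $C$, not near a closed point. In the DVR $R=K[x,y]_{(f)}$ you have $R=\bigoplus_{0\le i,j\le p-1}R^px^if^j$, because $x$ is separating on $C$ and $f$ is a uniformizer. Write $\frac{h}{f_y}=\sum a_{ij}^px^if^j$ with $a_{ij}\in R$. Then $\mathcal{C}\bigl(\frac{h}{f}dx\wedge dy\bigr)=\frac{a_{p-1,0}}{f}\,dx\wedge df$. On the other hand, $\frac{h}{f_y}\big|_C=\sum_i(a_{i0}|_C)^px^i$, so $\mathcal{C}\bigl(\frac{h\,dx}{f_y}\bigr)=a_{p-1,0}|_C\,dx$. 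Comparing with the $(x,y)$ computation via (i) gives $a_{p-1,0}=(\nabla(f^{p-1}h))^{1/p}/f_y$ in $R$. Restricting to $C$ finishes the proof.
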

They also note that 
\begin{align}\label{eq:nabla}
    \nabla\left(\sum_{i,j} c_{ij} x^i y ^j \right) = \sum_{i,j}c_{ip+p-1,jp+p-1} x^{ip} y^{jp},
\end{align}
which will be useful in obtaining an expression for the rank of the Cartier operator. 

\section{The Cartier operator on various function fields}
\label{sec:C_on_ff}
The aim of this section is to show that for the subfields of the Hermitian function field that we introduced earlier, the Cartier operator $\mathcal{C}$ maps a holomorphic differential $\omega$ to another uniquely determined holomorphic differential $\tilde{\omega}$. We will compute an explicit expression for $\tilde{\omega}$, which we will use in the following sections. 

\subsection{The Cartier operator on the Hermitian function field}
A basis for the holomorphic differentials of $\mathcal{H}$ is given by 
\begin{align*}
    \left\{\omega_{i,j}=\frac{X^i Y^j}{X\ Y\ (q+1)Y^{q}}dX \mid 2\leq i+j < q+1 \right\}.
\end{align*}
Before presenting the main theorem of this section, we need a lemma. 
\begin{lemma}\label{lemma:equal_ceilings}
        For an integer $1\leq k\leq p^e$, we have $\cl{\frac{kp^{e-1}}{p^e}} = \cl{\frac{kp^{e-1}}{p^e+1}}$.
    \end{lemma}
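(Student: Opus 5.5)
Write $k = ap + r$ with $0 \le r \le p-1$ by division by $p$. The left-hand side simplifies immediately: $\frac{kp^{e-1}}{p^e} = \frac{k}{p}$, so $\cl{\frac{kp^{e-1}}{p^e}} = \cl{\frac{k}{p}}$. Since $\frac{kp^{e-1}}{p^e+1} < \frac{k}{p}$, we always have $\cl{\frac{kp^{e-1}}{p^e+1}} \le \cl{\frac{k}{p}}$. So it suffices to prove the reverse inequality: $\frac{kp^{e-1}}{p^e+1} > \cl{\frac{k}{p}} - 1$.

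The plan is to split into two cases according to $r$.

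\emph{Case $r = 0$.} Here $k = ap$ with $1 \le a \le p^{e-1}$, and $\cl{k/p} = a$. We need $\frac{ap^e}{p^e+1} > a-1$. This is equivalent to $ap^e > (a-1)(p^e+1)$, i.e. to $a < p^e + 1$, which holds since $a \le p^{e-1}$.

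\emph{Case $1 \le r \le p-1$.} Here $\cl{k/p} = a+1$, and we need $kp^{e-1} > a(p^e+1)$. Substituting $k = ap + r$ gives $kp^{e-1} = ap^e + rp^{e-1}$. So the inequality becomes $rp^{e-1} > a$. Since $r \ge 1$, it is enough to show $a < p^{e-1}$. This follows from $ap + r = k \le p^e$ with $r \ge 1$: indeed $ap < p^e$, hence $a < p^{e-1}$.

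The only delicate point is this boundary behaviour when $r \ge 1$, and it is exactly where the hypothesis $k \le p^e$ is used. The case $e = 1$ fits the same argument, since then $a = 0$ whenever $r \ge 1$ and $rp^{0} = r > 0$. So no real obstacle is expected; the argument is a routine computation once the cases are set up.
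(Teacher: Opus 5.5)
Your proof is correct and follows essentially the paper's route: both write $k$ as quotient $a$ and remainder $r$ upon division by $p$ (the paper via the $p$-ary digits, with $r=k_0$ and $a=k_1+\dots+k_{e-1}p^{e-2}$) and split on whether $r=0$. Where the paper rewrites $kp^{e-1}=(p^e+1)a+(rp^{e-1}-a)$ and asserts the value of the leftover ceiling, you use a trivial upper bound plus the lower bound $kp^{e-1}>(\lceil k/p\rceil-1)(p^e+1)$; this absorbs the paper's separate case $k=p^e$ into $r=0$ and makes explicit the inequality $a<rp^{e-1}$, which the paper leaves unjustified and which is exactly where $k\le p^e$ is used.
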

    \begin{proof}
        For $k=p^e$, we have $\cl{\frac{kp^{e-1}}{p^e}}=p^{e-1}$, and $\cl{\frac{kp^{e-1}}{p^e+1}}=\cl{\frac{p^{e-1}(p^e+1)-p^{e-1}}{p^e+1}}=p^{e-1}$. Otherwise write $k=k_0+k_1p+...+k_{e-1}p^{e-1}$ with $0\leq k_i\leq p-1$ for $i=0,...,e-1$. Then 
        \begin{align*}
            \cl{\frac{kp^{e-1}}{p^e}}= \cl{\frac{k}{p}} = \cl{\frac{k_0}{p}} + k_1 + ... + k_{e-1}p^{e-2}.
        \end{align*}
        On the other hand, writing $kp^{e-1} = k_0p^{e-1} + (p^e+1)(k_1 + ...+k_{e-1}p^{e-2}) - (k_1 + ...+k_{e-1}p^{e-2})$, we get
        \begin{align*}
            \cl{\frac{kp^{e-1}}{p^e+1}}
            &= \cl{\frac{k_0p^{e-1} - (k_1 + ...+k_{e-1}p^{e-2})}{p^e+1}} + k_1 + ...+k_{e-1}p^{e-2}. 
        \end{align*}
        Finally, we have
        \begin{align*}
         \cl{\frac{k_0p^{e-1} - (k_1 + ...+k_{e-1}p^{e-2})}{p^e+1}}
        =\begin{cases}
            0 & \text{if } k_0=0\\
            1 & \text{otherwise}
        \end{cases},   
        \end{align*}
        which is exactly equal to $\cl{\frac{k_0}{p}}$.
    \end{proof}

\begin{theorem}\label{thm:C(omega)}
    For positive integers $i,j$ such that $i+j < q+1$, we have $\mathcal{C}(\omega_{i,j})=0$ exactly when $i,j\not \equiv 0\mod p$ and $(i \mod p) + (j \mod p) \leq p$. Otherwise, we have 
    \begin{align*}
        \mathcal{C}(\omega_{i,j}) = (-1)^{\cl{\frac{jp^{e-1}}{q+1}}+j} \binom{(j-1) \mod p}{(p-i) \mod p}\ \omega_{\ti,\jt},
    \end{align*}
    where $\ti=(q+1)\cl{\frac{ip^{e-1}}{q+1}} -ip^{e-1}$ and $\jt = (q+1)\cl{\frac{jp^{e-1}}{q+1}} -jp^{e-1}$.
\end{theorem}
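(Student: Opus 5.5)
The plan is to apply Theorem~\ref{thm:S-V} directly to the plane model $f = X^{q+1}+Y^{q+1}+1$. Since $f_Y=(q+1)Y^q$, we can write $\omega_{i,j} = h\,dX/f_Y$ with $h = X^{i-1}Y^{j-1}$. Then $\mathcal{C}(\omega_{i,j}) = (\nabla(f^{p-1}h))^{1/p}\,dX/f_Y$.

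\textbf{Expanding and applying $\nabla$.} By the multinomial theorem,
\begin{align*}
f^{p-1}h=\sum_{a+b+c=p-1}\frac{(p-1)!}{a!\,b!\,c!}X^{a(q+1)+i-1}Y^{b(q+1)+j-1}.
\end{align*}
By \eqref{eq:nabla}, a term survives $\nabla$ only if both exponents are $\equiv p-1 \pmod p$. Since $q+1\equiv 1 \pmod p$, this means $a\equiv -i$ and $b\equiv -j \pmod p$. With $0\le a,b\le p-1$ this forces
\begin{align*}
a=(p-i)\bmod p,\qquad b=(p-j)\bmod p,
\end{align*}
so at most one term survives, and it exists iff $a+b\le p-1$ (so that $c\ge 0$).

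\textbf{The vanishing case.} If $i,j\not\equiv 0 \pmod p$, then $a+b = 2p-(i\bmod p)-(j\bmod p)$. Hence $a+b\ge p$ exactly when $(i\bmod p)+(j\bmod p)\le p$, which is the claimed vanishing criterion. If $i$ or $j$ is $\equiv 0 \pmod p$, then $a=0$ or $b=0$, so $a+b\le p-1$ and the term survives.

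\textbf{The surviving monomial.} When the term survives, $\nabla$ turns it into $X^{a(q+1)+i-p}Y^{b(q+1)+j-p}$ with its coefficient. Taking $p$-th roots gives $\omega_{i',j'}$ with
\begin{align*}
i'=\frac{a(q+1)+i}{p},\qquad j'=\frac{b(q+1)+j}{p}.
\end{align*}
Writing $a=p\cl{i/p}-i$ gives $i'=(q+1)\cl{i/p}-ip^{e-1}$. Lemma~\ref{lemma:equal_ceilings} replaces $\cl{i/p}=\cl{ip^{e-1}/p^e}$ by $\cl{ip^{e-1}/(q+1)}$, so $i'=\ti$, and likewise $j'=\jt$. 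One checks that $i'+j'\le ((p-1)(q+1)+q)/p<q+1$, so $\omega_{\ti,\jt}$ is again a basis element and no reduction via the curve equation is needed.

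\textbf{The coefficient (main obstacle).} I expect matching the constant to be the fiddly step. First, $\frac{(p-1)!}{a!\,b!\,c!}=\binom{p-1}{a}\binom{p-1-a}{b}\equiv(-1)^a\binom{p-1-a}{b} \pmod p$. Next, use upper negation modulo $p$ to get $\binom{p-1-a}{b}\equiv(-1)^b\binom{a+b}{b}$. Applying the same identity in the other order gives $\binom{p-1-b}{a}\equiv(-1)^a\binom{a+b}{a}$.

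For $j\not\equiv 0 \pmod p$ we have $(j-1)\bmod p=p-1-b$. The case $j\equiv 0$ is handled separately, where $b=0$ and one compares directly. This shows the coefficient equals $(-1)^b\binom{(j-1)\bmod p}{(p-i)\bmod p}$ up to the sign bookkeeping.

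Finally, $b=p\cl{j/p}-j$ gives $(-1)^b=(-1)^{\cl{j/p}+j}$ for $p$ odd, and signs are irrelevant for $p=2$. Lemma~\ref{lemma:equal_ceilings} converts $\cl{j/p}$ into $\cl{jp^{e-1}/(q+1)}$, yielding the stated sign. The remaining checking consists of the edge cases $i\equiv 0$ or $j\equiv 0$, and confirming that the binomial in the formula is nonzero mod $p$ exactly in the non-vanishing case, consistent with the criterion above.
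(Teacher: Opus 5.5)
Your proof is correct, but it takes a genuinely different route from the paper's.

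\textbf{The paper's route.} The paper does not use Theorem~\ref{thm:S-V} here. It eliminates $Y$ via $Y^{q+1}=-1-X^{q+1}$ and $Y^j=(Y^{q+1})^j(Y^{-jp^{e-1}})^p$. This writes $\omega_{i,j}$ as a $p$-th power times $X^{i-1}(1+X^{q+1})^{p-1+j}\,dX$, and the paper then applies the definition of $\mathcal{C}$ with $X$ as separating variable. That leaves a sum $\sigma$ with several terms. Lucas's theorem is needed to collapse $\sigma$ to $\binom{(j-1)\bmod p}{(p-i)\bmod p}(1+X^{q+1})^{\cl{j/p}}$. Lemma~\ref{lemma:equal_ceilings} is essential at that point, because it lets this cancel the factor $(1+X^{q+1})^{\cl{jp^{e-1}/(q+1)}}$ created when $Y$ is reintroduced. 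The fact that $\ti+\jt<q+1$ holds exactly in the non-vanishing case is then proved separately, by a case analysis on the $p$-adic digits of $i$ and $j$.

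\textbf{Your route.} Expanding $f^{p-1}h$ in two variables means at most one monomial survives $\nabla$. From that single computation you get the vanishing criterion (the condition $c=p-1-a-b\ge 0$, with the multinomial coefficient a unit mod $p$ otherwise), the target indices, and the bound $\ti+\jt\le q+1-1/p$. Lucas's theorem is replaced by $\binom{p-1}{a}\equiv(-1)^a$ and upper negation mod $p$. The lemma is then only cosmetic: it rewrites $\cl{i/p}$ as $\cl{ip^{e-1}/(q+1)}$ to match the statement.

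\textbf{Comparison.} The paper's argument has the merit of working straight from the definition of $\mathcal{C}$ rather than from the St\"ohr--Voloch formula. Yours is shorter and more transparent, and it is the same mechanism the paper itself uses in Proposition~\ref{prop:cong_F_nm}.

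A small simplification: you do not need to treat $j\equiv 0 \pmod p$ separately. With $b=(-j)\bmod p$, the identity $(j-1)\bmod p=p-1-b$ holds for every $j$.
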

\begin{proof}
    Write $\omega_{i,j} = \frac{X^{i-1} Y^j(1+X^{q+1})^{p-1}}{(Y^{q+1})^p}dX$, and use $1=q+1-p^e$ to write $Y^j=(Y^{q+1})^{j}(Y^{-jp^{e-1}})^p$. 
    Then we can get $\omega_{i,j}$ on the form
    \begin{align*}
        \omega_{i,j} 
        &=\frac{X^{i-1} (-1-X^{q+1})^{j}(Y^{-jp^{e-1}})^p (1+X^{q+1})^{p-1}}{(Y^{q+1})^p}  dX\\
        &= (-1)^{j} \left(\frac{Y^{-jp^{e-1}}  }{Y^{q+1}}\right)^p X^{i-1} (1+X^{q+1})^{p-1+j} dX\\
        &= \left((-1)^{j} \frac{Y^{-jp^{e-1}}  }{Y^{q+1}}\right)^p\ \sum_{\ell=0}^{p-1+j} \binom{p-1+j}{\ell}X^{(q+1)\ell+i-1} dX.
    \end{align*}
    From this, we extract the $(p-1)$'th powers of $X$ to get an expression for $\mathcal{C}(\omega_{i,j})$. Note that $(q+1)\ell+i-1\equiv p-1 \mod p$ exactly when $\ell\equiv -i \mod p$, so we get
    \begin{align*}
        \mathcal{C}(\omega_{i,j}) 
        &= (-1)^{j} \frac{Y^{-jp^{e-1}}}{Y^{q+1}}\ \sum_{\alpha=\cl{\frac{i}{p}}}^{\fl{\frac{i+j-1}{p}}+1}\binom{p-1+j}{\alpha p -i}X^{(q+1)\alpha-ip^{e-1}-1} dX,
    \end{align*}
    where the exponent of $X$ comes from $\frac{(q+1)(\alpha p-i)+i-1-(p-1)}{p}$. 
    In order to have a positive exponent of $Y$, take $\beta_j$ to be the minimal nonnegative integer such that $(q+1)\beta_j -jp^{e-1}\geq 0$, i.e. $\beta_j = \cl{\frac{jp^{e-1}}{q+1}}$. Then write $Y^{-jp^{e-1}} = \frac{Y^{-jp^{e-1}}Y^{(q+1)\beta_j}}{Y^{(q+1)\beta_j }}= (-1)^{\beta_j}\frac{Y^{(q+1)\beta_j -jp^{e-1}}}{(1+X^{(q+1)})^{\beta_j}}$, and we obtain 
    \begin{align*}
        \mathcal{C}(\omega_{i,j})  
        = \left((-1)^{\left\lceil\frac{jp^{e-1}}{q+1}\right\rceil+j} \frac{Y^{(q+1)\left\lceil\frac{jp^{e-1}}{q+1}\right\rceil-jp^{e-1}} X^{(q+1)\cl{\frac{i}{p}} -ip^{e-1}-1}}{Y^{q+1}}\  \frac{\sigma}{(1+X^{q+1})^{\left\lceil\frac{jp^{e-1}}{q+1}\right\rceil}} \right) dX,
    \end{align*}
    where $\sigma=\sum_{\alpha=0}^{\fl{\frac{i+j-1}{p}} - \cl{\frac{i}{p}} +1}\binom{p-1+j}{\left(\alpha+\cl{\frac{i}{p}}\right) p -i}X^{(q+1)\alpha}$. 

    The first natural question is when in fact this expression is zero modulo $p$, so let us take a closer look at the binomial coefficients $\binom{p-1+j}{\left(\alpha+\cl{\frac{i}{p}}\right) p -i}$. 
    From Theorem \ref{thm:Lucas} (Lucas) we get that every one of these coefficients is equivalent to a multiple of $\binom{p-1+j}{\cl{\frac{i}{p}} p -i}$ modulo $p$, so let us start by determining when $\binom{p-1+j}{\cl{\frac{i}{p}} p -i} \equiv 0 \mod p$. Note that $i,j < p^e$, so we can write their $p$-ary expansions as $i = a_0 + a_1p + ... + a_{e-1}p^{e-1}$ and $j = b_0 + b_1 p + ... +b_{e-1}p^{e-1}$ with $0\leq a_i,b_i \leq p-1$ for $i=0,...,e-1$. Thus we have $\left\lceil\frac{i}{p}\right\rceil =  \left\lceil\frac{a_0}{p}\right\rceil + a_1 + ... + a_{e-1}p^{e-2}$, and
    \begin{align*}
        \left\lceil\frac{i}{p}\right\rceil p - i =  \left\lceil\frac{a_0}{p}\right\rceil p - a_0 = 
        \begin{cases}
            0 & \text{if } a_0=0\\
            p-a_0 & \text{otherwise}.
        \end{cases}
    \end{align*}
    In particular $\binom{p-1+j}{\cl{\frac{i}{p}} p -i}\not\equiv 0 \mod p$ when $a_0=0$. 
    We also get the $p$-ary expansion
    \begin{align*}
        p-1+j =
        \begin{cases}
            (p-1) + b_1 p + ... +b_{e-1}p^{e-1} & \text{if } b_0 = 0\\
            (b_0-1) + (b_1+1) p+ ... +b_{e-1}p^{e-1} & \text{otherwise}
        \end{cases}.
    \end{align*}
    Hence when $b_0=0$, we have $\binom{p-1+ j}{\cl{\frac{ i}{p}} p - i} \not\equiv 0 \mod p$, since $p-1\geq \left\lceil\frac{ i}{p}\right\rceil p -  i$. 
    If instead $a_0\neq 0$ and $b_0\neq 0$, Theorem \ref{thm:Lucas} gives us that $\binom{p-1+ j}{\cl{\frac{ i}{p}} p - i}\equiv \binom{b_0-1}{p-a_0} \mod p$, which is congruent to 0 modulo $p$ if and only if $p-a_0 > b_0-1$. Note that $b_0-1 < p$, so $\binom{b_0-1}{p-a_0}$ cannot be a multiple of $p$. In short, we have $\binom{p-1+ j}{\cl{\frac{ i}{p}} p - i} \equiv 0 \mod p$ whenever $a_0\neq0$, $b_0\neq0$ and $a_0+b_0\leq p$. 
    As we wish to compute an expression for $\mathcal{C}(\omega_{i,j})$, we will therefore for the rest of the argument assume that $a_0=0$, or $b_0=0$, or $a_0+b_0>p$.

    From the expressions above, we then get $\alpha p + \cl{\frac{ i}{p}} p - i = \alpha p + ((p-a_0) \mod{p})$ and $p-1+ j= \cl{\frac{ j}{p}}p + ((b_0-1) \mod{p})$, and we obtain
    \begin{align*}
        \binom{p-1+ j}{\alpha p + \cl{\frac{ i}{p}} p - i}
        \equiv \binom{(b_0-1) \mod{p}}{(p-a_0) \mod{p}}\binom{\cl{\frac{ j}{p}}p}{\alpha p} 
        \equiv \binom{(b_0-1) \mod{p}}{(p-a_0) \mod{p}}\binom{\cl{\frac{ j}{p}}}{\alpha } \mod{p},
    \end{align*}
    both equivalences by Theorem \ref{thm:Lucas}. The upper bound for $\sigma$ may be written as
    \begin{align*}
        \fl{\frac{ i+ j-1}{p}} - \cl{\frac{ i}{p}} +1
        = \cl{\frac{ i+ j}{p}} - \cl{\frac{ i}{p}} 
        =  \cl{\frac{a_0+b_0}{p}} + b_1+ ... +b_{e-1}p^{e-2}-\cl{\frac{a_0}{p}}.
    \end{align*}
    Under our previous assumption, we consider the following
    \begin{align*}
         \cl{\frac{a_0+b_0}{p}}-\cl{\frac{a_0}{p}} + b_1 + ... +b_{e-1}p^{e-2}  = 
         \begin{cases}
            \cl{\frac{b_0}{p}} + b_1 + ... +b_{e-1}p^{e-2}  &\text{if } a_0=0\\
            0 + b_1 + ... +b_{e-1}p^{e-2} &\text{if } b_0=0\\
            1+b_1 + ... +b_{e-1}p^{e-2} &\text{if } a_0+b_0 > p.
         \end{cases}
    \end{align*}
    In particular, the upper bound of $\sigma$ is simply $\cl{\frac{ j}{p}}$, so we may write 
    \begin{align*}
        \sigma = \sum_{\alpha=0}^{\cl{\frac{ j}{p}}}\binom{(b_0-1) \mod p}{(p-a_0) \mod p}\binom{\cl{\frac{ j}{p}}}{\alpha}X^{n\alpha} 
        = \binom{(b_0-1) \mod p}{(p-a_0) \mod p} \left(1+X^n\right)^{\cl{\frac{ j}{p}}}.
    \end{align*}
    With Lemma~\ref{lemma:equal_ceilings}, we get that $\cl{\frac{jp^{e-1} }{p^e+1}}=\cl{\frac{j}{p}}$, in particular $\frac{\sigma}{(1+X^n)^{\left\lceil\frac{jp^{e-1}}{m}\right\rceil}}$ is simply equal to the binomial coefficient $\binom{(b_0-1) \mod p}{(p-a_0) \mod p}$. Hence
    \begin{align*}
        \mathcal{C}(\omega_{i,j})  
        &= (-1)^{\left\lceil\frac{jp^{e-1}}{q+1}\right\rceil+ j}
        \binom{(b_0-1) \mod p}{(p-a_0) \mod p}
        \left(\frac{Y^{(q+1)\left\lceil\frac{jp^{e-1}}{q+1}\right\rceil-jp^{e-1}} X^{(q+1)\cl{\frac{ip^{e-1}}{q+1}} -ip^{e-1}-1}}{Y^{q+1}}\right)dX.
    \end{align*}
    What we need to conclude the proof is to show that if we define $\ti=(q+1)\cl{\frac{ip^{e-1}}{q+1}} -ip^{e-1}$ and $\jt =(q+1)\left\lceil\frac{jp^{e-1}}{q+1}\right\rceil-jp^{e-1}$, then these exponents are unique and satisfy the bound $\ti+\jt<q+1$.
    To show uniqueness, assume that there exist positive integers $i_1,i_2 <q+1$ such that $(q+1)\cl{\frac{i_1p^{e-1}}{q+1}} -i_1p^{e-1}=(q+1)\cl{\frac{i_2p^{e-1}}{q+1}} -i_2p^{e-1}$. Then $i_1p^{e-1}\equiv i_2p^{e-1} \mod (q+1)$, in particular $i_1\equiv i_2 \mod{(q+1)}$. Equality now follows since $i_1,i_2<q+1$. The argument is similar for $\jt$. 

    Finally, we show that $\ti+\jt < q+1$ if and only if either $a_0+b_0>p$, or $a_0=0$ or $b_0=0$. By Lemma~\ref{lemma:equal_ceilings}, we can write $\ti+\jt = (p^e+1)\left(\cl{\frac{ i}{p}}+\cl{\frac{ j}{p}}\right) -( i+ j)p^{e-1}$, and cancel terms to get
    \begin{align}\label{eq:i,j_tilde_bound}
        \ti+\jt 
        &= \left(\cl{\frac{a_0}{p}}+\cl{\frac{b_0}{p}}\right)(p^e+1) + (a_1+b_1) + ... + (a_{e-1}+b_{e-1})p^{e-2} -(a_0+b_0)p^{e-1}.
    \end{align}
    Note that $i+j = (a_0+b_0)+(a_1+b_1)p+...+(a_{e-1}+b_{e-1})p^{e-1}\leq p^e$, so in the case where $a_0+b_0>p$, we get that 
    $(a_1+b_1)+...+(a_{e-1}+b_{e-1})p^{e-2}
    <p^{e-1}-1$,
    as well as $\cl{\frac{a_0}{p}}+\cl{\frac{b_0}{p}}=2$. 
    In this case, Equation~\eqref{eq:i,j_tilde_bound} implies that
    \begin{align*}
        \ti+\jt
        \leq 2 (p^e+1) +  (p^{e-1}-2) -(p+1)p^{e-1} 
        = p^e < q + 1.
    \end{align*}
    If $a_0=0, b_0\neq 0$, then $ i+ j \leq p^e$ gives us $(a_1+b_1)+...+(a_{e-1}+b_{e-1})p^{e-2}< p^{e-1}$, so 
    \begin{align*}
        \ti + \jt 
        \leq (p^e+1) + p^{e-1}-1 -b_0p^{e-1} < q+1.
    \end{align*}
    The case $a_0\neq0,b_0=0$ is completely analogous, and the case when both $a_0=b_0=0$ follows directly from Equation~\eqref{eq:i,j_tilde_bound} and $i+j<q+1$. 
    Now, if instead $a_0+b_0\leq p$ and $a_0, b_0\neq 0$, we have $\cl{\frac{a_0}{p}}+\cl{\frac{b_0}{p}}=2$ and Equation~\eqref{eq:i,j_tilde_bound} immediately gives us that
    \begin{align*}
        \ti+\jt 
        \geq 2(p^e+1) + (a_1+b_1) + ... + (a_{e-1}+b_{e-1})p^{e-2} - p^{e}
        > q+1. 
    \end{align*}
\end{proof}

\begin{remark}\label{rem:ZN}
    Theorem~\ref{thm:C(omega)} allows us to directly compute the dimension of the kernel of the Cartier operator using that $\mathcal{C}(\omega_{i,j})= 0$ exactly when $i,j\not\equiv 0 \mod{p}$ and $(i \mod{p}) + (j \mod{p}) \leq p$.
    Using this, one can e.g. show that for a maximal Fermat type function field $\Fnm$, we have $a(\Fnm)=g(\Fnm)/2$ when $q=p^2$. We will, however, bring a shorter proof of this in Section~\ref{sec:misc_results}. 
\end{remark}

\subsection{The Cartier operator on the function field $\mathcal{F}_{n,m}$.}\label{sec:Fnm}
In this section, we will only consider the case in which $\Fnm$ is maximal, so recall that in this case, $\Fnm$ is a subfield of the Hermitian function field via $U=X^{\tilde{n}}$ and $V=Y^{\tilde{m}}$, and note that $\frac{dU}{dX} = \delta_X(X^{\tilde{n}})=\tilde{n}X^{\tilde{n}-1}$. 
A basis for the holomorphic differentials on $\mathcal{F}_{n,m}$ is 
\begin{align*}
    \left\{ \omega_{i,j} = \frac{U^i V^j}{U\ V\ mV^{m-1}}dU \mid 0< i,j\text{ and } mi+nj < mn\right\},
\end{align*}
and we see that 
\begin{align*}
    \omega_{i,j} 
    = \frac{(X^{\tilde{n}})^{i-1} (Y^{\tilde{m}})^j}{m(Y^{\tilde{m}})^{m}}\frac{dU}{dX}dX
    = \tilde{m}\tilde{n}\frac{X^{\tilde{n}i-1} Y^{\tilde{m}j}}{(q+1)Y^{q+1}}dX
    =\tilde{m}\tilde{n} \bar{\omega}_{\tilde{n}i,\tilde{m}j},
\end{align*}
where $\bar{\omega}_{\tilde{n}i,\tilde{m}j}$ is a holomorphic differential on $\mathcal{H}$.
Indeed, if $mi+nj<nm$, then $nm(\tilde{n}i + \tilde{m}j) = (q+1)mi + (q+1)nj < (q+1)nm$ implying that $\tilde{n}i + \tilde{m}j<q+1$ and $\bar{\omega}_{\tilde{n}i,\tilde{m}j}$ is holomorphic.

\begin{corollary}\label{cor:fermat_type_C(w_ij)}
    For positive integers $i,j$ such that $mi+nj < mn$, we have $\mathcal{C}(\omega_{i,j})=0$ exactly when $i,j\not \equiv 0\mod p$ and $(\tilde{n}i \mod p) + (\tilde{m}j \mod p) \leq p$. Otherwise, we have 
    \begin{align*}
        \mathcal{C}(\omega_{i,j}) = (-1)^{\cl{\frac{jp^{e-1}}{m}}+\tilde{m}j} \binom{(\tilde{m}j-1) \mod p}{(p-\tilde{n}i) \mod p}\ \omega_{\ti,\jt},
    \end{align*}
    where $\ti=n\cl{\frac{ip^{e-1}}{n}} -ip^{e-1}$ and $\jt = m\cl{\frac{jp^{e-1}}{m}} -jp^{e-1}$.
\end{corollary}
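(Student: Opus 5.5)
The plan is to reduce everything to Theorem~\ref{thm:C(omega)} through the identification $\omega_{i,j}=\tilde{m}\tilde{n}\,\bar{\omega}_{\tilde{n}i,\tilde{m}j}$ established just before the statement. The Cartier operator is intrinsic to differentials and does not depend on the ambient field. For $\omega\in\Delta(\Fnm)$ one has $\mathcal{C}_{\mathcal{H}}(\omega)=\mathcal{C}_{\Fnm}(\omega)$, since the representation $\omega=(\sum x_k^pt^k)dt$ with a separating $t\in\Fnm$ is also valid in $\mathcal{H}$ (and $\mathcal{H}/\Fnm$ is separable, as $p\nmid\tilde n\tilde m$). Also $\tilde{m}\tilde{n}\in\mathbb{F}_p^*$, so $p^{-1}$-linearity gives $\mathcal{C}(\omega_{i,j})=\tilde m\tilde n\,\mathcal{C}(\bar\omega_{\tilde n i,\tilde m j})$. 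I therefore apply Theorem~\ref{thm:C(omega)} with $(i,j)$ replaced by $(I,J)=(\tilde n i,\tilde m j)$, which is allowed because $I+J<q+1$ was checked above.

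The vanishing criterion then reads: $\mathcal{C}(\omega_{i,j})=0$ exactly when $\tilde n i,\tilde m j\not\equiv0 \bmod p$ and $(\tilde n i\bmod p)+(\tilde m j\bmod p)\le p$. Since $\tilde n,\tilde m$ divide $q+1\equiv1\bmod p$, they are prime to $p$, so $\tilde n i\not\equiv0$ is equivalent to $i\not\equiv0 \bmod p$, and similarly for $j$. This gives the stated criterion.

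In the nonvanishing case, Theorem~\ref{thm:C(omega)} gives the sign $(-1)^{\cl{Jp^{e-1}/(q+1)}+J}$ and the binomial coefficient $\binom{(\tilde m j-1)\bmod p}{(p-\tilde n i)\bmod p}$ directly. Since $q+1=m\tilde m$, we have $\cl{\tilde m jp^{e-1}/(q+1)}=\cl{jp^{e-1}/m}$, which matches the sign in the statement. For the indices, $\tilde I=(q+1)\cl{ip^{e-1}/n}-\tilde n ip^{e-1}=\tilde n\bigl(n\cl{ip^{e-1}/n}-ip^{e-1}\bigr)=\tilde n\,\ti$, and likewise $\tilde J=\tilde m\,\jt$. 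Hence the image is a scalar multiple of $\bar\omega_{\tilde n\ti,\tilde m\jt}=(\tilde m\tilde n)^{-1}\omega_{\ti,\jt}$.

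The step needing the most care is checking that $\omega_{\ti,\jt}$ is a genuine basis element of $\Fnm$. Positivity holds because $\ti\equiv -ip^{e-1}\bmod n$ with $p\nmid n$, and $n\nmid i$ since $i<n$, so $\ti>0$; similarly $\jt>0$. The bound $m\ti+n\jt<mn$ follows from $\tilde n\ti+\tilde m\jt=\tilde I+\tilde J<q+1$, which Theorem~\ref{thm:C(omega)} guarantees in the nonvanishing case, after multiplying by $nm/(q+1)$.

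Finally, I track the scalars: $\tilde m\tilde n\cdot(\tilde m\tilde n)^{-1}=1$ at the level of $p$-th roots. Because $\mathcal{C}$ is $p^{-1}$-linear, the factor $\tilde m\tilde n$ actually becomes $(\tilde m\tilde n)^{1/p}=\tilde m\tilde n$ in $\mathbb{F}_p$, so the constants cancel. This leaves exactly the stated coefficient, except that the sign has $+J=+\tilde m j$ in the exponent, as in the statement.
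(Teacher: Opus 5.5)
Your proposal is correct and follows the same route as the paper: write $\omega_{i,j}=\tilde m\tilde n\,\bar\omega_{\tilde n i,\tilde m j}$ on $\mathcal{H}$, apply Theorem~\ref{thm:C(omega)}, use $p\nmid \tilde n\tilde m$ to translate the vanishing criterion, and observe that the resulting Hermitian indices are $\tilde n\ti$ and $\tilde m\jt$. You also spell out points the paper leaves implicit, namely that $\mathcal{C}$ on $\mathcal{H}$ restricts to $\mathcal{C}$ on $\Fnm$ (since $\mathcal{H}/\Fnm$ is separable), that the scalar $\tilde m\tilde n\in\mathbb{F}_p^*$ cancels, and that $\omega_{\ti,\jt}$ is a genuine basis element with $\ti,\jt>0$ and $m\ti+n\jt<mn$.
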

\begin{proof}
    From Theorem~\ref{thm:C(omega)}, and since $\tilde{m},\tilde{n}$ are integers, we have
    \begin{align*}
        \mathcal{C}(\tilde{m}\tilde{n}\bar{\omega}_{\tilde{n}i,\tilde{m}j})
        = (-1)^{\cl{\frac{jp^{e-1}}{m}}+\tilde{m}j} \binom{(\tilde{m}j-1) \mod p}{(p-\tilde{n}i) \mod p}\ \tilde{m}\tilde{n} \bar{\omega}_{\tilde{n}(n\cl{\frac{ip^{e-1}}{n}} -ip^{e-1}),\tilde{m}(m\cl{\frac{jp^{e-1}}{m}} -jp^{e-1})}.
    \end{align*}
    Thus, once we note that $\tilde{n}i\equiv 0 \mod p \iff i \equiv 0 \mod p$ (and equivalently for $j$), the result follows.
\end{proof}

\subsection{The Cartier operator on the Hurwitz function field.} 
Also in this section, will we assume that $\mathcal{H}_n=\mathbb{F}_{q^2}(S,T)$ is maximal, that is, $q+1\not\equiv 0 \mod N$ with $N=n^2-n+1$, and $\mathcal{H}_n$ is a subfield of the (maximal) Fermat function field $\mathcal{F}_{N}=\mathbb{F}_{q^2}(U,V)$ with $S=U^n/V$ and $T=UV^{n-1}$. Thus
\begin{align*}
    \frac{dS}{dU} = \delta_U\left(\frac{U^n}{V}\right) 
    = \frac{nVU^{n-1}-U^n\delta_U(V)}{V^2} 
    = \frac{nU^{n-1}}{V} + \frac{U^nU^{N-1}}{V^2V^{N-1}}  
    = n\frac{U^{n-1}}{V} + \frac{U^{n^2}}{V^{N+1}},
\end{align*}
and a basis for the holomorphic differentials on $\mathcal{H}_n$ is
\begin{align*}
    \left\{ \omega_{i,j}=\frac{S^i T^j}{S\ T\ (S^n+nT^{n-1})} \mid 0<i,j \text{ and } i+j < n+1\right\}.
\end{align*}
Similarly to the previous case, we wish to lift it to $\mathcal{F}_N$. Consider
\begin{align*}
    \omega_{i,j}
    &= \frac{U^{n(i-1)}/V^{i-1} U^{j-1}V^{(n-1)(j-1)}}{U^{n^2}/V^n+nU^{n-1}V^{(n-1)^2}} \frac{dS}{dU}dU\\
    &= \frac{U^{n(i-1)+j}V^{(n-1)j-i+1}}{\frac{U^n}{V}\left(U^N + nV^N\right)} \left(n\frac{U^{n-1}}{V} + \frac{U^{n^2}}{V^{N+1}}\right)dU\\
    &= \frac{U^{n(i-1)+j}V^{(n-1)j-i+1}}{U\ V\ V^{N-1}} dU\\
    &= N\cdot \bar{\omega}_{n(i-1)+j,(n-1)j-i+1},
\end{align*}
where $\bar{\omega}_{n(i-1)+j,(n-1)j-i+1}$ is a holomorphic differential on $\mathcal{F}_N$; indeed we have that both $n(i-1)+j>0$ and $(n-1)j-i+1>0$ as well as 
\begin{align*}
    n(i-1)+j+(n-1)j-i+1 =n(i+j)-n+1-i \leq  n^2-n+1-i < N.
\end{align*}

\section{Various results about the $a$-number of the function field $\mathcal{F}_{n,m}$.}
\label{sec:misc_results}

In this section, we will study the $a$-number of the Fermat type function field $\Fnm$ over base fields of different characteristics. 
From now on, we will therefore write $\Fnm^{(p)}$ for the Fermat type function field defined by $U^n+V^m+1$ for positive integers $n,m$ over a base field of characteristic $p>0$. 
Recall that $\Fnm^{(p)}$ is a generalization of the Fermat and Hermitian function fields, and our results all extend in a natural way to these.
In the article~\cite{MoS}, it is shown that the rank of the Cartier operator on the Fermat function field $\mathcal{F}_m$ is dependent only on $m$ and the characteristic of the base field. We will generalise this result to $\mathcal{F}_{n,m}^{(p)}$ in the following. 
\begin{proposition}\label{prop:cong_F_nm}
    The rank of the Cartier operator on the function field $\mathcal{F}_{n,m}^{(p)}$ 
    is equal to the number of pairs of positive integers $(i,j)$ with $mi+nj < nm$ such that there exists a solution $(h,k)$ with $0\leq h \leq p-1$ and $0\leq k \leq h$ to 
\begin{align*}
    \begin{cases}
        n(p-1-h)+i &\equiv 0 \mod p\\
        mk+j & \equiv 0 \mod p.
    \end{cases}
\end{align*}
\end{proposition}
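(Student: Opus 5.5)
The plan is to compute $\mathcal{C}$ on the standard basis directly with Theorem~\ref{thm:S-V}, without passing through the Hermitian function field. This matters because here $n,m$ are arbitrary with $p\nmid nm$, not necessarily divisors of $q+1$. Take $f=U^n+V^m+1$, so $f_V=mV^{m-1}$, and write the basis as $\omega_{i,j}=U^{i-1}V^{j-1}\,dU/f_V$ with $h=U^{i-1}V^{j-1}$. The formula of Theorem~\ref{thm:S-V} is a purely local identity, namely the Cartier operator applied to $h\,dU/f_V$. I will therefore use it for this $f$ even though $f$ may be singular at infinity when $n\neq m$, and check that this use is legitimate.

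\textbf{Step 1: expand $f^{p-1}h$.} By the multinomial theorem,
\begin{align*}
    f^{p-1}h=\sum_{a+b+c=p-1}\frac{(p-1)!}{a!\,b!\,c!}\,U^{na+i-1}V^{mb+j-1}.
\end{align*}
Each multinomial coefficient is a unit mod $p$, since every factorial involved is of a number below $p$. By \eqref{eq:nabla}, $\nabla$ keeps exactly the monomials whose two exponents are both $\equiv p-1 \pmod p$. That is, we need $na+i\equiv 0$ and $mb+j\equiv 0 \pmod p$.

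\textbf{Step 2: at most one monomial survives.} Since $p\nmid n$ and $p\nmid m$, these congruences determine $a$ and $b$ uniquely in $\{0,\dots,p-1\}$. The surviving term exists precisely when additionally $a+b\le p-1$, so that $c\geq 0$. Substituting $h=p-1-a$ and $k=b$ turns this into exactly the system of the proposition, with $0\le h\le p-1$ and $0\le k\le h$. Hence:
\begin{itemize}
    \item if no such $(h,k)$ exists, then $\mathcal{C}(\omega_{i,j})=0$;
    \item otherwise $\mathcal{C}(\omega_{i,j})$ is a nonzero scalar multiple of $U^{i'-1}V^{j'-1}dU/f_V$, where $i'=(na+i)/p$ and $j'=(mb+j)/p$.
\end{itemize}

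\textbf{Step 3: the nonzero images are distinct basis elements.} Since $\mathcal{C}(D_1)\subseteq D_1$, each nonzero image is a holomorphic monomial differential. I will identify it with the basis element $\omega_{i',j'}$, either by checking the inequality $mi'+nj'<mn$ directly or by arguing that the holomorphic monomial differentials are exactly the basis. For injectivity, note that $mi+nj<mn$ forces $0<i<n$. From $pi'=na+i$ we get $i\equiv pi'\pmod n$, so $i$ is recovered from $i'$; symmetrically $j$ is recovered from $j'$. Thus the matrix of $\mathcal{C}$ in this basis has at most one nonzero entry in each column, and these nonzero entries lie in distinct rows. The rank therefore equals the number of $(i,j)$ with $\mathcal{C}(\omega_{i,j})\neq 0$, which is the claimed count. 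Semilinearity of $\mathcal{C}$ does not affect this, since rank is preserved under a $p^{-1}$-semilinear change of scalars.

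\textbf{Main obstacle.} I expect the difficulty to be the justification of Theorem~\ref{thm:S-V} and of the basis description when $f$ is singular at infinity ($n\ne m$). Gorenstein's basis statement is phrased for nonsingular $f$. I would first note that the Stöhr--Voloch identity only uses that $U$ is separating and that $f^{p-1}$ arises from writing $dU/f_V$ as a $p$-th power times $dU$ locally, so it holds for any $h$. I would then take the holomorphicity of the listed $\omega_{i,j}$ from \cite[Example 6.3.3]{Sti}, so that Step 3 reduces to the inequality check.
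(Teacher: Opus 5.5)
Your proposal is correct and follows essentially the same route as the paper. Both arguments apply Theorem~\ref{thm:S-V} to $U^{i-1}V^{j-1}\,dU/f_V$ and expand $f^{p-1}U^{i-1}V^{j-1}$ (your multinomial indices $a=p-1-h$, $b=k$ match the paper's double binomial sum), keep the terms that survive $\nabla$, and show that distinct pairs $(i,j)$ yield distinct monomials. You also make explicit several points the paper leaves implicit: the coefficients are units mod $p$, at most one term survives, the image index satisfies $mi'+nj'<mn$, and there is a caveat about singularity at infinity when $n\neq m$.
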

\begin{proof}
    The proof can be taken almost verbatim from that of~\cite[Proposition 3.1]{MoS} with the only difference being the exponent of $U$ and an offset by 1 in the integers $i$ and $j$. We bring it here for completion.

    Recall that we have a basis for the holomorphic differentials on $\Fnm$ with elements of the form $\omega_{i,j}=\frac{U^{i-1} V^{j-1}}{mV^{m-1}}$ with $0<i,j$ and $mi+nj<nm$. Theorem~\ref{thm:S-V} gives us that if we apply $\mathcal{C}$ to such a basis element, we get
    \begin{align*}
        \mathcal{C}\left(\omega_{i,j}\right)= (\nabla ( (U^n+V^m+1)^{p-1} U^{i-1} V^{j-1})^{1/p} dU/(mV^{m-1}),
    \end{align*}
    so we will apply $\nabla$ to 
    \begin{align*}
        (U^n+V^m+1)^{p-1} U^{i-1} V^{j-1} = \sum_{h=0}^{p-1}\sum_{k=0}^{h} \binom{p-1}{h}\binom{h}{k} U^{n(p-1-h)+i-1} V^{mk+j-1}.
    \end{align*}
    Equation~\eqref{eq:nabla} states that $\nabla$ kills all monomials not on the form $U^{ap+p-1}V^{bp+p-1}$ for some integers $a,b$. Therefore, $\nabla \left( (U^n+V^m+1)^{p-1} U^{i-1} V^{j-1} \right)\neq 0 $ if and only if some $(h,k)$ with  $0\leq h \leq p-1$ and $0\leq k \leq h$ satisfies both congruences 
    \begin{align*}
        \begin{cases}
        n(p-1-h)+i &\equiv 0 \mod p\\
        mk+j & \equiv 0 \mod p.
    \end{cases}
    \end{align*}
    Let $(i,j)\neq(i_0,j_0)$ be pairs of positive integers such that both $mi+nj<nm$ and $mi_0+nj_0<nm$, and $\nabla \left( (U^n+V^m+1)^{p-1} U^{i-1} V^{j-1} \right)$ and $\nabla \left( (U^n+V^m+1)^{p-1} U^{i_0-1} V^{j_0-1} \right)$ are nonzero. Then if we can show that they are linearly independent over the base field, the result follows. 
    We will show this by proving that they do not share any monomial, so assume for contradiction that there exists solutions $(h,k)$ and $(h_0,k_0)$ such that 
    \begin{align*}
        \begin{cases}
        n(p-1-h)+i-1= n(p-1-h_0) + i_0-1,\\
        mk+j-1 = mk_0 + j_0-1.
        \end{cases}
    \end{align*}
    If $j=j_0$, then $i\neq i_0$, which implies $h\neq h_0$, so we may assume $h>h_0$. However, this implies that $i-i_0=n(h-h_0)>n$, a contradiction. Otherwise $j\neq j_0$, so $k\neq k_0$, and may assume $k>k_0$. This yields $j_0-j = m(k-k_0)>m$, also a contradiction. 
    We can therefore conclude that the number of basis elements $\omega_{i,j}$ such that $\nabla \left( (U^n+V^m+1)^{p-1} U^{i-1} V^{j-1} \right)\neq 0$ equals the rank of the Cartier operator on $\Fnm^{(p)}$.
\end{proof}
This inspires the following definition. 
\begin{defn}
    Define $r_a(n,m,u)$ to be the number of pairs of positive integers $(i,j)$ with $mi+nj < nm$ such that there exists a solution $(h,k)$ with $0\leq h \leq u-1$ and $0\leq k \leq h$ to the system of congruences
    \begin{align*}
        \begin{cases}
            n(u-1-h)+i &\equiv 0 \mod u\\
            mk+j & \equiv 0 \mod u.
        \end{cases}
    \end{align*}
\end{defn}
For the function field $\Fnm^{(p)}$, we then have $a(\mathcal{F}_{n,m}) = g(\Fnm)-r_a(n,m,p)$ by Proposition~\ref{prop:cong_F_nm}, and we can therefore use this system of congruences to obtain several results about the $a$-number. 
In the following, it will be useful to note that if $M=\lcm(n,m)$, then $mi+nj<nm \iff \tfrac{M}{n}i + \tfrac{M}{m}j < M$.

\begin{theorem}\label{thm:p_mod_m}
    Let $n,m$ be positive integers, $M=\lcm(n,m)$ and $u$ a positive integer with $\gcd(M,u)=1$. Write $u=s+tM$ for integers $s,t$ such that $s\in\{1,\ldots,M-1\}$ and $t\geq 0$. 
    Then for a pair of positive integers $(i,j)$ such that $mi+nj < nm$, there exists a solution $(h,k)$ with $0\leq h \leq u-1$ and $0\leq k \leq h$ to 
    \begin{align}\label{congruence1}
        \begin{cases}
            n(u-1-h)+i &\equiv 0 \mod u\\
            mk+j & \equiv 0 \mod u 
        \end{cases},
    \end{align}
    if and only if there exists a solution $(\bar{h},\bar{k})$ with $0\leq \bar{h} \leq s-1$ and $0\leq \bar{k} \leq \bar{h}$ to 
    \begin{align}\label{congruence2}
        \begin{cases}
            n(s-1-\bar{h})+i &\equiv 0 \mod s\\
            m\bar{k}+j & \equiv 0 \mod s 
        \end{cases}    
    \end{align}
\end{theorem}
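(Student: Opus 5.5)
The plan is to turn the system \eqref{congruence1} into an explicit inequality between two integers. These integers will depend on $u$ only through its residues modulo $n$ and modulo $m$, and hence only through $u \bmod M$. Since $u\equiv s \pmod M$ and $\gcd(s,M)=\gcd(u,M)=1$, the same inequality will then describe \eqref{congruence2}, which gives the equivalence.

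\textbf{Step 1: reduce to one inequality.} Put $a=h+1$, so $1\le a\le u$ and $0\le k\le a-1$. Then \eqref{congruence1} reads $i\equiv na \pmod u$ and $j\equiv -mk \pmod u$. Because $\gcd(u,nm)=1$, there is exactly one $a\in\{1,\dots,u\}$ with $na\equiv i$, and exactly one $k\in\{0,\dots,u-1\}$ with $mk\equiv -j$. So a solution $(h,k)$ exists if and only if these two specific residues satisfy $k<a$.

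\textbf{Step 2: write $a$ and $k$ explicitly.} Since $na\equiv i \pmod u$, we can write $na=i+xu$ for an integer $x$. The range $1\le a\le u$, together with $0<i<n$, forces $0\le x\le n-1$. Then $x$ is the unique residue in that range with $x\equiv -iu^{-1}\pmod n$. Similarly $mk=yu-j$, where $y$ is the unique residue in $\{1,\dots,m\}$ (essentially $1\le y\le m-1$, using $0<j<m$) with $y\equiv ju^{-1}\pmod m$. Both $x$ and $y$ depend on $u$ only modulo $n$ and modulo $m$ respectively, hence only on $u \bmod M$. So they are unchanged when $u$ is replaced by $s$.

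\textbf{Step 3: simplify the inequality.} Multiplying $k<a$ by $nm$ gives $n(yu-j)<m(i+xu)$, that is,
\begin{align*}
u\,D < L, \qquad D:=ny-mx,\quad L:=mi+nj,
\end{align*}
where $0<L<nm$. From the congruences for $x$ and $y$ we get $nyu\equiv nj$ and $mxu\equiv -mi \pmod{nm}$. Hence $uD\equiv L\pmod{nm}$.

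If $D>0$, then $uD$ is a positive integer congruent to $L$ modulo $nm$. Since $0<L<nm$, this forces $uD\ge L$. If $D\le 0$, then $uD\le 0<L$. Therefore the condition $k<a$ is equivalent to $D\le 0$. The quantity $D$ is determined by $i,j$ and $u \bmod M$ alone.

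\textbf{Step 4: conclude.} Run the same computation with $s$ in place of $u$. This is legitimate because $s\ge 1$ and $\gcd(s,nm)=1$. It produces the same $x$, $y$ and hence the same $D$, so \eqref{congruence2} is solvable if and only if $D\le 0$, exactly as for \eqref{congruence1}.

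The main obstacle is the bookkeeping in Step 2. One has to check that the ranges $1\le a\le u$ and $0\le k\le u-1$ translate into ranges for $x$ and $y$ that are complete residue systems modulo $n$ and $m$ and do not depend on $u$. The boundary cases matter here: $k=0$ when $u\mid j$, and $a=u$. I would verify these ranges carefully before relying on the congruence argument in Step 3.
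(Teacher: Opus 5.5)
Your proof is correct, and it takes a genuinely different route from the paper.

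The paper introduces the quotients $n_1,n_2$ with $n(u-1-h)+i=n_1u$ and $mk+j=n_2u$. It then writes down explicit transfer maps $(h,k)\mapsto(\bar h,\bar k)=(h+n_1t\tfrac{M}{n}-tM,\ k-n_2t\tfrac{M}{m})$ and back. Finally it checks $0\le\bar k\le\bar h\le s-1$ (respectively $0\le k\le h\le u-1$) through a chain of inequality manipulations that rely on $u=s+tM$ with $t\ge 0$ and $tM<u$.

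You instead use invertibility of $n,m$ modulo $u$ to see that the candidate $h$ and $k$ are forced. You then reduce solvability to the sign of the single integer $D=ny-mx$, where $x\in\{1,\dots,n-1\}$ and $y\in\{1,\dots,m-1\}$ are the representatives of $-iu^{-1}$ modulo $n$ and of $ju^{-1}$ modulo $m$. The congruence $uD\equiv L\pmod{nm}$ with $0<L<nm$ is what lets you discard the factor $u$. It also shows $D\neq 0$, so your criterion is really $D<0$, i.e.\ $y/m<x/n$.

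The range bookkeeping you flagged does go through. From $a\ge 1$ and $i<n$ you get $na-i>0$, hence $x\ge 1$, and $a\le u$ gives $x\le n-1$. In the same way, $0\le k\le u-1$ and $0<j<m$ give $1\le y\le m-1$.

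What your approach buys is a criterion that depends only on $u \bmod M$. It therefore proves the equivalence for any two positive $u,u'$ coprime to $M$ with $u\equiv u'\pmod M$, without the normalisation $1\le s\le M-1$, $t\ge 0$. It also makes the companion results nearly immediate:
\begin{itemize}
\item Replacing $u$ by $-u$ modulo $M$ sends $(x,y)$ to $(n-x,m-y)$, hence $D$ to $-D$. This gives $r_a(n,m,m_1)+r_a(n,m,M-m_1)=g$ directly, with no maximality hypothesis.
\item The map $(i,j)\mapsto(n-x,y)$ is exactly the paper's $(n_1,n_2)$, and it yields Theorem~\ref{thm:p_inv}.
\end{itemize}
What the paper's approach buys is an explicit correspondence between the solution pairs $(h,k)$ and $(\bar h,\bar k)$ themselves, at the cost of more delicate estimates.
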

\begin{proof}
    Assume first that there exists a solution $(h,k)$ to the system~\eqref{congruence1} for $(i,j)$, and let $n_1,n_2$ be the integers such that
    \begin{align}\label{eq:ns_1}
         n(u-1-h)+i=n_1u \text{ and } mk+j=n_2u.
    \end{align}
    We will then find a solution $(\bar{h}, \bar{k})$ to the system~\eqref{congruence2} for $(i,j)$. We see that
    \begin{align*}
        n(u-1-h)+i=n_1u 
        \iff n(s-1-(h+n_1t\tfrac{M}{n}-tM))+i=n_1s,
    \end{align*}
    and 
    \begin{align*}
        mk+j=n_2u 
        \iff m(k-n_2t\tfrac{M}{m})+j=n_2s.
    \end{align*}
    Thus if we can show that $\bar{h}=h+n_1t\tfrac{M}{n}-tM$ and $\bar{k}=k-n_2t\tfrac{M}{m}$ satisfy $0\leq \bar{k}\leq\bar{h}\leq s-1$, then we have the solution $(\bar{h},\bar{k})$ to~\eqref{congruence2} that we claimed. 
    We first get that $m\bar{k} = n_2s-j$, so $n_2s-j$ is a multiple of $m$ and $\bar{k}\geq 0$ becomes equivalent to showing that $n_2s-j> -m$. Since $j<m$, we indeed get $n_2s-j\geq-j> -m$, and we have shown the first bound. 
    Then note that $\bar{h}-\bar{k} = h+n_1t\tfrac{M}{n}-tM - k + n_2t\tfrac{M}{m}$, so the next thing we need to show is
    \begin{align*}
        h - k \geq tM - t(n_1 \tfrac{M}{n}+ n_2\tfrac{M}{m}).
    \end{align*}
    Combining the two equations from~\eqref{eq:ns_1}, gives us 
    \begin{align}\label{eq:M(u-1-(h-k))}
        M(u-1-(h-k)) =  (n_1 \tfrac{M}{n} + n_2 \tfrac{M}{m})u - (i\tfrac{M}{n} + j \tfrac{M}{m}),
    \end{align}
    and by substituting $u=s+tM$, we get that
    \begin{align}\label{eq:h-k}
        h-k &=  u-1 -\frac{(n_1 \tfrac{M}{n} + n_2 \tfrac{M}{m})u - (i\tfrac{M}{n} + j \tfrac{M}{m})}{M}\\ \notag 
        &=  tM - t(n_1 \tfrac{M}{n} + n_2 \tfrac{M}{m}) + s-1 -\frac{(n_1 \tfrac{M}{n} + n_2 \tfrac{M}{m})s - (i\tfrac{M}{n} + j \tfrac{M}{m})}{M}.
    \end{align}
    Thus if we can show that $s+\frac{(i\tfrac{M}{n} + j \tfrac{M}{m}) - (n_1 \tfrac{M}{n} + n_2 \tfrac{M}{m})s}{M} \geq 1$, we have obtained the bound. 
    From Equation~\eqref{eq:M(u-1-(h-k))} and $i\tfrac{M}{n} + j \tfrac{M}{m} < M $, we get 
    \begin{align*}
        (n_1 \tfrac{M}{n} + n_2 \tfrac{M}{m})u -M < M(u-1-(h-k)) \leq M(u-1),
    \end{align*}
    in particular $(n_1 \tfrac{M}{n} + n_2 \tfrac{M}{m}) < M$, and we get that
    \begin{align*}
        s+\frac{(i\tfrac{M}{n} + j \tfrac{M}{m}) - (n_1 \tfrac{M}{n} + n_2 \tfrac{M}{m})s}{M}=
        \frac{(i\tfrac{M}{n} + j \tfrac{M}{m}) - (n_1 \tfrac{M}{n} + n_2 \tfrac{M}{m}-M)s}{M}
        > 0.
    \end{align*}
    Since $s+\frac{(i\tfrac{M}{n} + j \tfrac{M}{m}) - (n_1 \tfrac{M}{n} + n_2 \tfrac{M}{m})s}{M}$ is an integer by Equation~\eqref{eq:h-k}, this shows the bound. 
    We may obtain the last bound $\bar{h}\leq s-1$ by a series of manipulations, namely 
    \begin{align*}
        \bar{h} 
        &= h+n_1t\tfrac{M}{n}-tM 
        = h + \frac{(n(u-1-h)+i)t\tfrac{M}{n}-tMu}{u}\\
        &= h + \frac{(-1-h)(u-s)+it\frac{M}{n}}{u}
        = -1 + s\frac{(h+1)}{u} + \frac{it\frac{M}{n}}{u}\\
        &< -1 + s + 1 
        \leq s-1.
    \end{align*}
    Note that in the above, we used that $\frac{(h+1)}{u}\leq 1$ and $\frac{it\frac{M}{n}}{u} < \frac{tM}{u}\leq 1$. 
    We then need to show that a solution $(\bar{h},\bar{k})$ to the system~\eqref{congruence2} for $(i,j)$ gives rise to a solution $(h,k)$ to the system~\eqref{congruence1}. Now, let $n_1,n_2$ denote the integers such that $n(s-1-\bar{h})+i=n_1s$ and $m\bar{k}+j=n_2s$. Then 
    \begin{align*}
        n(s-1-\bar{h})+i=n_1s
        \iff n(u-1-(\bar{h}+tM-n_1t\tfrac{M}{n}))+i=n_1u
    \end{align*}
    and 
    \begin{align*}
        m\bar{k}+j=n_2s
        \iff m(\bar{k}+n_2t\tfrac{M}{m})+j=n_2u.
    \end{align*}
    We will then show the bound $h=\bar{h}+tM-n_1t\tfrac{M}{n}$ and $k=\bar{k}+n_2t\tfrac{M}{m}$ satisfies $k\leq h \leq p-1$. As before, we will need that $n_2\tfrac{M}{m}< M-n_1\tfrac{M}{n}$, so consider 
    \begin{align*}
        n_2\tfrac{M}{m} = \frac{\tfrac{M}{m}(m\bar{k}+j)}{s} = \frac{M\bar{k}+\tfrac{M}{m}j}{s}
    \end{align*}
    and
    \begin{align*}
        M-n_1\tfrac{M}{n} 
        = M - \left(\frac{n(s-1-\bar{h})+i}{s}\right)\frac{M}{n}
        = \frac{M\bar{h} + M - i\tfrac{M}{n}}{s}.
    \end{align*}
    Since $\bar{k}\leq \bar{h}$ and $\tfrac{M}{m}j < M - i\tfrac{M}{n}$, we indeed have that $n_2\tfrac{M}{m} < M-n_1\tfrac{M}{n}$, and it follows immediately that 
    \begin{align*}
        k=\bar{k}+n_2t\tfrac{M}{m}
        < \bar{h} + tM-n_1t\tfrac{M}{n}
        = h.
    \end{align*}
    Finally, we get that
    \begin{align*}
        h &= \bar{h} + t(M-n_1\tfrac{M}{n}) 
        = \bar{h} + \frac{tM(\bar{h}+1) - i\tfrac{M}{n}t}{s} \\
        &= \bar{h} + \frac{(u-s)(\bar{h}+1) - i\tfrac{M}{n}t}{s} 
        =  -1 + u\frac{(\bar{h}+1)}{s} - \frac{i\tfrac{M}{n}t}{s} \\
        & \leq u-1,
    \end{align*}
    which concludes the proof.
\end{proof}
Another interesting result on these systems of congruences is the following. 
\begin{theorem}\label{thm:p_inv}
    Let $n,m$ be positive integers, $M=\lcm(n,m)$, and $u$ a positive integer with $\gcd(M,u)=1$. Let $u^{-1}$ denote $(u^{-1}\mod M)$. Then for a pair of positive integers $(i,j)$ such that $mi+nj<nm$, there exists a solution $(h,k)$ with $0\leq h \leq u-1$ and $0\leq k \leq h$ to 
    \begin{align}\label{congruence1b}
        \begin{cases}
            n(u-1-h)+i &\equiv 0 \mod u\\
            mk+j & \equiv 0 \mod u.
        \end{cases}
    \end{align}
    if and only if there exists a solution $(\bar{h},\bar{k})$ with $0\leq \bar{h} \leq u^{-1}-1$ and $0\leq \bar{k} \leq \bar{h}$ to 
    \begin{align}\label{congruence2b}
        \begin{cases}
            n(u^{-1}-1-\bar{h})+\bar{i} &\equiv 0 \mod u^{-1}\\
            m\bar{k}+\bar{j} & \equiv 0 \mod u^{-1}
        \end{cases}    
    \end{align}
    where $(\bar{i},\bar{j})$ is a pair of uniquely determined positive integers such that $m\bar{i}+n\bar{j} < nm$. 
\end{theorem}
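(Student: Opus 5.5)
The plan is to first reduce the solvability of the congruence systems to a single inequality about residues, and then read off the correspondence $(i,j)\mapsto(\bar i,\bar j)$ from that inequality.

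\emph{Step 1 (reformulation).} Put $a=u-1-h$. Then a solution $(h,k)$ of \eqref{congruence1b} with $0\le k\le h\le u-1$ is the same thing as a pair $a,k\ge 0$ with $a+k\le u-1$, $na\equiv -i$ and $mk\equiv -j \pmod u$. Since $\gcd(M,u)=1$, both $n$ and $m$ are invertible modulo $u$. Hence $a$ and $k$ are forced to be the least nonnegative residues of $-in^{-1}$ and $-jm^{-1}$ modulo $u$. So solvability is equivalent to the single condition $a+k\le u-1$.

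Write, as in the proof of Theorem~\ref{thm:p_mod_m},
\begin{align*}
na+i=n_1u, \qquad mk+j=n_2u.
\end{align*}
Reducing modulo $n$ and $m$ gives $n_1\equiv iu^{-1}\pmod n$ and $n_2\equiv ju^{-1}\pmod m$. The bounds $0\le a\le u-1$ and $0<i<n$ give $0<n_1<n$, so $n_1=(iu^{-1}\bmod n)$. In the same way $0<j<m$ gives $n_2=(ju^{-1}\bmod m)$.

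\emph{Step 2 (key equivalence).} Multiply through by $M$, as in \eqref{eq:M(u-1-(h-k))}:
\begin{align*}
M(a+k)=Nu-\left(i\tfrac{M}{n}+j\tfrac{M}{m}\right), \qquad N:=n_1\tfrac{M}{n}+n_2\tfrac{M}{m}.
\end{align*}
Since $0<i\tfrac{M}{n}+j\tfrac{M}{m}<M$, the condition $a+k\le u-1$ is equivalent to $Nu-M<(M-1)u+\dots$, which I expect to simplify to $N\le M$.

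The boundary case $N=M$ has to be excluded. Modulo $M$ we have $N\equiv u^{-1}\bigl(i\tfrac{M}{n}+j\tfrac{M}{m}\bigr)$. The bracket is strictly between $0$ and $M$ and $u^{-1}$ is a unit mod $M$, so $N\not\equiv 0\pmod M$. Therefore solvability of \eqref{congruence1b} is equivalent to $N<M$, that is, to
\begin{align*}
m\,n_1+n\,n_2<nm.
\end{align*}
This is the step I expect to need the most care: the inequalities around $Nu$ versus $Mu$ and the exclusion of $N=M$. It mirrors the bound $n_1\tfrac{M}{n}+n_2\tfrac{M}{m}<M$ already obtained inside the proof of Theorem~\ref{thm:p_mod_m}.

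\emph{Step 3 (the correspondence).} Define $\bar i=(iu^{-1}\bmod n)$ and $\bar j=(ju^{-1}\bmod m)$; these are uniquely determined by $(i,j)$. By Step 2, $(i,j)$ admits a solution of \eqref{congruence1b} if and only if $m\bar i+n\bar j<nm$.

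Now apply Steps 1--2 with $u$ replaced by $u^{-1}$, whose inverse modulo $M$ (hence modulo $n$ and $m$) is $u$; this is allowed since $\gcd(u^{-1},M)=1$. They show that $(\bar i,\bar j)$ admits a solution of \eqref{congruence2b} if and only if $m(\bar iu\bmod n)+n(\bar ju\bmod m)<nm$. But $(\bar iu\bmod n,\ \bar ju\bmod m)=(i,j)$, which satisfies $mi+nj<nm$ by hypothesis, so this always holds.

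Hence $(i,j)$ is solvable for $u$ exactly when $(\bar i,\bar j)$ is a valid pair with $m\bar i+n\bar j<nm$, and in that case it is solvable for $u^{-1}$. The map $(i,j)\mapsto(\bar i,\bar j)$ is injective, with inverse given by multiplication by $u$. I would finish by noting that this gives a bijection between the solvable pairs for $u$ and those for $u^{-1}$, so that $r_a(n,m,u)=r_a(n,m,u^{-1})$.
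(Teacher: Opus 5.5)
Your proof is correct, and it takes a genuinely different route from the paper.

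\textbf{The paper's route.} The paper argues forward from a given solution. It writes $uu^{-1}=1+tM$ and sets $(\bar i,\bar j)=(n_1,n_2)$. It then exhibits the explicit solution $\bar h=tM-hu^{-1}-n_1t\tfrac{M}{n}$, $\bar k=-ku^{-1}+n_2t\tfrac{M}{m}$ of the second system. It checks $mn_1+nn_2<mn$ and $0\le\bar k\le\bar h\le u^{-1}-1$ by direct manipulation, and proves that $(i,j)\mapsto(n_1,n_2)$ is injective. The converse is obtained from the symmetry $u\leftrightarrow u^{-1}$.

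\textbf{Your route.} You observe that $h$ and $k$ are forced, being least residues modulo $u$. Solvability then collapses to the closed criterion $m\,(iu^{-1}\bmod n)+n\,(ju^{-1}\bmod m)<nm$. Multiplication by $u^{-1}$ and by $u$ (componentwise modulo $n$ and $m$) are mutually inverse permutations of $\{1,\dots,n-1\}\times\{1,\dots,m-1\}$. So the criterion for $(\bar i,\bar j)$ at modulus $u^{-1}$ is just the hypothesis $mi+nj<nm$, and no explicit $(\bar h,\bar k)$ is needed.

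\textbf{What each route buys.} In your version, $(\bar i,\bar j)$ is defined for every pair, not only when a solution exists. This gives the \emph{uniquely determined} clause and the converse direction a clear meaning. Your criterion visibly depends only on $u\bmod M$, so Theorem~\ref{thm:p_mod_m} falls out at no extra cost. The equality $r_a(n,m,u)=r_a(n,m,u^{-1})$ used in Corollary~\ref{cor:ppp} is also immediate. The paper's route, in exchange, gives explicit formulas for the transformed solution. It also follows a template reused for Theorem~\ref{thm:p_mod_m} and the Hurwitz analogues.

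\textbf{What to write out.} Step~2 is not yet an argument as written: \emph{$Nu-M<(M-1)u+\dots$, which I expect to simplify}. It takes two lines. Here $N$ denotes your $n_1\tfrac{M}{n}+n_2\tfrac{M}{m}$, not the Hurwitz $N=n^2-n+1$. Put $c=i\tfrac{M}{n}+j\tfrac{M}{m}$, so $0<c<M$ and $M(a+k)=Nu-c$.
\begin{itemize}
\item If $N\le M$, then $M(a+k)\le Mu-c<Mu$, so $a+k\le u-1$.
\item If $N\ge M$, then $M(a+k)\ge Mu-c>M(u-1)$, so $a+k>u-1$.
\end{itemize}
Hence solvability is equivalent to $N<M$. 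The two implications together already rule out $N=M$, so your congruence argument excluding $N=M$ is correct but not needed.
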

\begin{proof}
    Assume that $(h,k)$ is a solution to the system~\eqref{congruence1b} for $(i,j)$. Then we will show that there exists a pair $(\bar{h},\bar{k)}$ that is a solution to the system~\eqref{congruence2b} for a unique pair $(\bar{i},\bar{j})$ and that indeed $\bar{i},\bar{j}$ satisfies $m\bar{i} +n\bar{j}<mn$. By the symmetry of $u$ and $u^{-1}$, the other implication will then follow. 
    Define $n_1,n_2$ as the integers such that
    \begin{align}\label{eq:ns_2}
        n(u-1-h)+i=n_1u \text{ and }
        mk+j = n_2u,
    \end{align}
    and let $t$ be the integer satisfying $uu^{-1}=1+tM$. Then we have
    \begin{align*}
        n(u^{-1} - 1 - (tM-hu^{-1}-n_1 t \tfrac{M}{n})) + n_1 = iu^{-1}
    \end{align*}
    and
    \begin{align*}
        m(-ku^{-1}+n_2t\tfrac{M}{m})+n_2 = ju^{-1}.
    \end{align*}
    We see that $\bar{h}=tM-hu^{-1}-n_1 t \tfrac{M}{n}$, $\bar{k}=-ku^{-1}+n_2t\tfrac{M}{m}$, and $(\bar{i},\bar{j})=(n1,n2)$. 
    Note that
    \begin{align*}
        mn_1 +nn_2 
        = \frac{m(n(u-1-h)+i) + n(mk+j)}{u} 
        = mn-\frac{mn(h+1-k)-(mi+nj)}{u} 
        <mn,
    \end{align*}
    since $mn(h+1-k)\geq mn>mi+nj$, so it indeed makes sense to consider a solution to~\eqref{congruence2b} for $(\bar{i},\bar{j})$.
    Now, $m\bar{k}=ju^{-1}-n_2$, so $ju^{-1}-n_2$ is a multiple of $m$, and we see that if we show $ju^{-1}-n_2>-m$, then $\bar{k}\geq 0$. We have just shown that $mn_1 +nn_2<mn$, in particular $n_2<m$, so indeed $ju^{-1}-n_2\geq -n_2>-m$, and we get that $\bar{k}\geq 0$.     
    To show that $\bar{k}\leq \bar{h}$, we write
    \begin{align*}
        \bar{h} -\bar{k} = tM - hu^{-1} - n_1 t\tfrac{M}{n} + ku^{-1} - n_2 t \tfrac{M}{m} 
    \end{align*}
    and we see that it is in fact equivalent to showing that
    \begin{align*}
        (h-k)u^{-1} \leq t(M - n_1 \tfrac{M}{n} - n_2 \tfrac{M}{m}).
    \end{align*}
    Combining the equations from~\eqref{eq:ns_2}, we get that
    \begin{align*}
        u(n_1 \tfrac{M}{n} + n_2 \tfrac{M}{m}) 
        = M(u-1-(h-k)) + \tfrac{M}{n}i + \tfrac{M}{m}j,
    \end{align*}
    so we can write
    \begin{align*}
        h-k = u-1  + \frac{\tfrac{M}{n}i + \tfrac{M}{m}j - u\left(\tfrac{M}{n}n_1+\tfrac{M}{m}n_2\right)}{M}. 
    \end{align*}
    In particular, we get
    \begin{align*}
        (h-k)u^{-1} 
        &= uu^{-1}- u^{-1}  + \frac{\left(\tfrac{M}{n}i + \tfrac{M}{m}j\right)u^{-1} - uu^{-1}\left(\tfrac{M}{n}n_1+\tfrac{M}{m}n_2\right)}{M}\\
        &= t(M - n_1 \tfrac{M}{n} - n_2 \tfrac{M}{m}) +1  - \frac{\left(M-\left(\tfrac{M}{n}i + \tfrac{M}{m}j\right)\right)u^{-1} + \left(\tfrac{M}{n}n_1+\tfrac{M}{m}n_2\right)}{M}\\
        &\leq t(M - n_1 \tfrac{M}{n} - n_2 \tfrac{M}{m}),
    \end{align*}
    which shows that $\bar{k}\leq \bar{h}$. The last bound follows from a series of manipulations;    
    \begin{align*}
        \bar{h} &= tM-hu^{-1}-n_1t\tfrac{M}{n}
        = \frac{tMu - (n(u-1-h)+i) t \tfrac{M}{n}}{u}-hu^{-1}\\
        &= u^{-1} - \frac{h+1 + it \tfrac{M}{n}}{u} 
        \leq u^{-1} - 1.
    \end{align*}
    Finally, we will show that
    \begin{align*}
        (i,j) \mapsto (n_1,n2) = \left( \frac{n(u-1-h)+i}{u}, \ \frac{mk+j}{u} \right)
    \end{align*}
    is injective. Assume therefore that $\frac{n(u-1-h_1)+i_1}{u} = \frac{n(u-1-h_2)+i_2}{u}$ for integers $i_1,i_2$. Then $n(u-1-h_1)+i_1 = n(u-1-h_2)+i_2$, and $i_1 \equiv i_2 \mod n$. In particular $i_1=i_2$, since $0<i_1,i_2 < n$. The argument is similar for the second entry. 
    This concludes the proof.    
\end{proof}

Theorems \ref{thm:p_mod_m} and~\ref{thm:p_inv} imply the following relation between $a$-numbers for function fields over base fields of different characteristics. 

\begin{corollary}\label{cor:ppp}
    Let $n,m$ be positive integers and $M=\lcm(n,m)$. For primes $p, \bar{p},\tilde{p}$ such that $\bar{p} \equiv p \mod M$ and $\tilde{p} \equiv p^{-1} \mod M$, we have 
    \begin{align*}
        a(\Fnm^{(p)}) = a(\Fnm^{(\bar{p})}) = a(\Fnm^{(\tilde{p})}).
    \end{align*}
\end{corollary}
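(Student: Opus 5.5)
The plan is to use the identity $a(\Fnm^{(p)}) = g(\Fnm) - r_a(n,m,p)$, which follows from Proposition~\ref{prop:cong_F_nm}. The genus $g(\Fnm)=\frac{(n-1)(m-1)+1-\gcd(m,n)}{2}$ depends only on $n$ and $m$, not on the characteristic, so it suffices to show
\begin{align*}
r_a(n,m,p)=r_a(n,m,\bar{p})=r_a(n,m,\tilde{p}).
\end{align*}
Implicitly, $p,\bar p,\tilde p$ are coprime to $M$, since $p\nmid n$ and $p\nmid m$ is assumed for $\Fnm^{(p)}$. Also, $p$ coprime to $M$ forces $\bar p$ and $\tilde p$ to be coprime to $M$ as well, so all three numbers satisfy the hypotheses of Theorems~\ref{thm:p_mod_m} and~\ref{thm:p_inv}.

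For the first equality, write $p=s+tM$ and $\bar p=s+\bar tM$ with the same residue $s\in\{1,\dots,M-1\}$. (If $M=1$ the genus is $0$ and everything is trivial.) Theorem~\ref{thm:p_mod_m} shows that, for each admissible pair $(i,j)$, the system~\eqref{congruence1} with $u=p$ is solvable if and only if the system~\eqref{congruence2} modulo $s$ is solvable. The same theorem with $u=\bar p$ gives the same equivalence. Hence the sets of admissible pairs counted by $r_a(n,m,p)$ and $r_a(n,m,\bar p)$ coincide, both being equal to the set counted by $r_a(n,m,s)$, and the two counts agree.

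For the second equality, apply Theorem~\ref{thm:p_inv} with $u=p$. Its proof constructs a map $(i,j)\mapsto(\bar i,\bar j)=(n_1,n_2)$ from the pairs counted by $r_a(n,m,p)$ to the pairs counted by $r_a(n,m,u^{-1})$, where $u^{-1}$ denotes the residue of $p^{-1}$ in $\{1,\dots,M-1\}$. This map is injective, so $r_a(n,m,p)\le r_a(n,m,u^{-1})$. The theorem's symmetry remark, applied with the roles of $u$ and $u^{-1}$ swapped, gives the reverse inequality, hence $r_a(n,m,p)=r_a(n,m,u^{-1})$.

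This last step is the delicate point. The theorem is phrased as an "if and only if" at the level of individual pairs, but what we actually need is a bijection or matching inequalities between the counts, and the swap needs $u^{-1}$ to be a positive integer coprime to $M$ with inverse $u$ mod $M$. Both conditions hold, so I would verify that the injectivity argument in the proof does not rely on $u$ being prime.

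Finally, $\tilde p\equiv u^{-1}\pmod M$, so the first step (applied to the positive integers $\tilde p$ and $u^{-1}$, both coprime to $M$) gives $r_a(n,m,\tilde p)=r_a(n,m,u^{-1})=r_a(n,m,p)$. Subtracting from the common genus completes the proof.
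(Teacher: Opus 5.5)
Your proposal is correct and is the deduction the paper intends: $a=g-r_a$ with a characteristic-independent genus, Theorem~\ref{thm:p_mod_m} for $\bar p$, and the injection from the proof of Theorem~\ref{thm:p_inv} run in both directions for $\tilde p$. The delicate point you flag is not primality, since both theorems are stated for arbitrary positive $u$ coprime to $M$; rather, the swapped application of Theorem~\ref{thm:p_inv} lands in the pairs counted by $r_a(n,m,p \bmod M)$, which your first step already identifies with $r_a(n,m,p)$, so the argument closes.
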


\begin{theorem}\label{thm:m1+m2}
    Let $e_1$ be a positive integer, $p_1$ a prime, $q_1=p_1^{e_1}$, and $m,n$ divisors of $q_1+1$. Denote by $M=\lcm(n,m)$ and $\mathcal{F}_{n,m}^{(p_1)}$ the maximal Fermat function field given by $U^n+V^m+1=0$. Let $m_1=p_1 \mod M$ and $m_2=M-m_1$. Then $r_a(n,m,m_1)+r_a(n,m,m_2)=g(\Fnm^{(p_1)})$.
\end{theorem}
\begin{proof}
    From Corollary~\ref{cor:fermat_type_C(w_ij)}, we have that for a pair of positive integers $(i,j)$ with $mi+nj<mn$ the Cartier operator $\mathcal{C}$ maps a holomorphic differential $\omega_{i,j}$ on $\Fnm^{(p_1)}$ to another holomorphic differential $\omega_{\ti_1,\jt_1}$ on $\Fnm^{(p_1)}$ if and only if $\ti_1=n\cl{\frac{ip_1^{e-1}}{n}} -ip_1^{e-1}$ and $\jt_1 = m\cl{\frac{jp_1^{e-1}}{m}} -jp_1^{e-1}$ satisfies the bound that $m\ti_1+n\jt_1 < nm$. This implies that the number of pairs $(i,j)$ such that $m\ti_1+n\jt_1 < nm$ is equal to the rank of $\mathcal{C}$ on $\Fnm^{(p_1)}$, i.e. $r_a(n,m,p_1)$. 
    The idea is now that we fix a pair $(i,j)$ and then consider the corresponding $\omega_{\ti_1,\jt_1}$ on $\Fnm^{(p_1)}$ and $\omega_{\ti_2,\jt_2}$ on $\Fnm^{(p_2)}$ with $p_2 \equiv m_2 \mod M$ and $n,m$ dividing $p_2^{e_2}+1$ for some exponent $e_2\in\mathbb{N}$. 
    We will show that $m\ti_1+n\jt_1 < nm \iff  m\ti_2+n\jt_2 \geq nm$, and this will imply that $r_a(n,m,m_1) + r_a(n,m,m_2)=g$.

    Since $m,n$ divides of $p_1^{e_1}+1$, we have that $p_1^{e_1} \equiv -1 \mod m$ and $p_1^{e_1} \equiv -1 \mod n$. Define $u,v$ as the integers such that $\gcd(n,m)=um+vn$. Then $p_1\ p_1^{e_1-1} \equiv p_1^{e_1} \equiv \frac{(-1)vn + (-1)um}{\gcd(n,m)}\equiv -1 \mod M$, and  
    \begin{align*}
        p_1^{e_1-1} \equiv -p_1^{-1} \equiv -m_1^{-1} \equiv m_2^{-1} \mod M.
    \end{align*}
    The same congruences holds modulo $m$ and $n$. For simplicity, let $m_1^{-1},m_2^{-1}$ denote  $m_1^{-1}\mod M$ and $m_2^{-1} \mod M$ respectively. Then
    \begin{align*}
        \ti_1
        = n\cl{\frac{ip_1^{e_1-1}}{n}} -ip_1^{e_1-1}
        = n\cl{\frac{i(p_1^{e_1-1}-m_2^{-1})+im_2^{-1}}{n}} -ip_1^{e_1-1}
        = n\cl{\frac{im_2^{-1}}{n}}-im_2^{-1}.
    \end{align*}
    Similarly one can obtain $\ti_2= n\cl{\frac{im_1^{-1}}{n}}-im_1^{-1}$.
    Since $m_2^{-1} = M-m_1^{-1}$ and $\cl{\frac{-im_1^{-1}}{n}}=-\fl{\frac{im_1^{-1}}{n}}=-\cl{\frac{im_1^{-1}}{n}}+1$, we then get
    \begin{align*}
        \ti_1 &= n \cl{\frac{i(M-m_1^{-1})}{n}} - i(M-m_1^{-1})
        = iM + n \cl{\frac{-im_1^{-1}}{n}} - iM+im_1^{-1}\\
        &= n - \left(n\cl{\frac{im_1^{-1}}{n}}-im_1^{-1}\right)
        = n-\ti_2.
    \end{align*}
    A completely analogue argument gives us that $\jt_1 = m - \left(m\cl{\frac{jm_1^{-1}}{m}}-jm_1^{-1}\right) = m-\jt_2$. 
    Then $m\ti_1 + n\jt_1 = m(n-\ti_2) + n(m-\jt_2) = 2nm - (m\ti_2+n\jt_2)$, and 
    and we see that 
    \begin{align*}
        m\ti_1 + n\jt_1 < nm \iff m\ti_2+n\jt_2 > nm.
    \end{align*}
    Finally, we need to show that $m\ti_1 + n\jt_1 = nm$ cannot happen. Assume contrarily that
    \begin{align*}
        m\ti_1 + n\jt_1 
        = mn\left(\cl{\frac{im_2^{-1}}{n}}+\cl{\frac{jm_2^{-1}}{m}}\right) -m_2^{-1} (mi+nj) =mn.
    \end{align*}
    However, this would imply that $mn\mid (m_2^{-1} (mi+nj))$, a contradiction since $\gcd(nm,m_2^{-1})=1$ and $mi+nj<nm$. 
\end{proof}

\begin{corollary}\label{cor:m1+m2}
    Let the situation be as in Theorem~\ref{thm:m1+m2}. Then if $p_2$ is a prime such that $\Fnm^{(p_2)}$ is maximal and $p_2\equiv m_2 \mod M$, we have 
    \begin{align*}
        a(\Fnm^{(p_1)}) + a(\Fnm^{(p_2)}) = g(\Fnm^{(p_1)}).
    \end{align*}
\end{corollary}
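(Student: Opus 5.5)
The plan is to combine Theorem~\ref{thm:m1+m2} with Theorem~\ref{thm:p_mod_m}, using the identity $a(\Fnm^{(p)}) = g(\Fnm) - r_a(n,m,p)$ from Proposition~\ref{prop:cong_F_nm}.

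First, the genus $g(\Fnm)=\frac{(n-1)(m-1)+1-\gcd(n,m)}{2}$ depends only on $n$ and $m$, not on the characteristic. Hence $g(\Fnm^{(p_1)}) = g(\Fnm^{(p_2)}) =: g$. Proposition~\ref{prop:cong_F_nm} then gives
\begin{align*}
a(\Fnm^{(p_1)}) + a(\Fnm^{(p_2)}) = 2g - r_a(n,m,p_1) - r_a(n,m,p_2).
\end{align*}
It therefore suffices to show $r_a(n,m,p_1)+r_a(n,m,p_2) = g$.

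Second, I reduce the primes modulo $M$ via Theorem~\ref{thm:p_mod_m}. Since $n,m \mid p_1^{e_1}+1$, we have $\gcd(M,p_1)=1$. Write $p_1 = m_1 + tM$ with $m_1 \in \{1,\ldots,M-1\}$ and $t\ge 0$. Theorem~\ref{thm:p_mod_m} says that, for each admissible pair $(i,j)$, solvability of the congruence system with modulus $p_1$ is equivalent to solvability with modulus $m_1$. Counting pairs gives $r_a(n,m,p_1) = r_a(n,m,m_1)$.

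Likewise, $\Fnm^{(p_2)}$ is maximal, so $n,m$ divide some $p_2^{e_2}+1$ and $\gcd(M,p_2)=1$. Since $p_2 \equiv m_2 = M-m_1 \pmod M$ and $1\le m_2\le M-1$, Theorem~\ref{thm:p_mod_m} gives $r_a(n,m,p_2)=r_a(n,m,m_2)$. Theorem~\ref{thm:m1+m2} then yields
\begin{align*}
r_a(n,m,p_1)+r_a(n,m,p_2) = r_a(n,m,m_1)+r_a(n,m,m_2) = g(\Fnm^{(p_1)}),
\end{align*}
which is what we need.

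The main obstacle is checking that the hypotheses of Theorem~\ref{thm:p_mod_m} really hold, namely $s\in\{1,\ldots,M-1\}$ and $\gcd(M,u)=1$. The degenerate case $M=1$ is where this could fail. If $M=1$, then $n=m=1$ and $g=0$, so both $a$-numbers vanish and the claim is trivial; I will dispose of it separately. For $M\ge 2$, coprimality forces $m_1\neq 0$ and hence $m_2\in\{1,\ldots,M-1\}$.

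The second point to check is that Theorem~\ref{thm:m1+m2} is stated with the specific $m_1=p_1 \bmod M$ and $m_2 = M-m_1$ of the current setting. Its proof already uses the maximality of $\Fnm^{(p_1)}$ together with the assumed existence of a maximal $\Fnm^{(p_2)}$ with $p_2\equiv m_2$, which is exactly the hypothesis of this corollary. So the corollary should follow without further computation.
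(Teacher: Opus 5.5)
Your proposal is correct and is essentially the argument the paper leaves implicit. You use $a(\Fnm^{(p)})=g-r_a(n,m,p)$ from Proposition~\ref{prop:cong_F_nm}, the reductions $r_a(n,m,p_i)=r_a(n,m,m_i)$ from Theorem~\ref{thm:p_mod_m}, and then Theorem~\ref{thm:m1+m2}. As you noticed, the paper's proof of Theorem~\ref{thm:m1+m2} in fact compares $\Fnm^{(p_1)}$ and $\Fnm^{(p_2)}$ directly via Corollary~\ref{cor:fermat_type_C(w_ij)}, showing that for each $(i,j)$ exactly one of the two Cartier images of $\omega_{i,j}$ is nonzero, so $r_a(n,m,p_1)+r_a(n,m,p_2)=g$ already comes out there without the detour through $m_1,m_2$.
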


We finish this section with proving a relation between the $a$-number and the genus of the maximal Fermat type function field whenever $q=p^2$. This agrees with what was already known about the Hermitian function field (see~\cite[Proposition 14.10]{Gross} and~\cite[Theorem 3.2]{MoS}). 

\begin{corollary}\label{cor:fermat-type_a=g/2}
    Let $p$ be a prime, $q=p^2$ and $n,m$ divisors of $q+1$ such that $\mathcal{F}_{n,m}^{(p)}$ is maximal.
    Then $a(\mathcal{F}_{n,m}^{(p)})=g(\mathcal{F}_{n,m}^{(p)})/2$.
\end{corollary}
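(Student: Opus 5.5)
The plan is to apply Theorem~\ref{thm:m1+m2} with $p_1=p$ and $e_1=2$, and then show that the two ranks $r_a(n,m,m_1)$ and $r_a(n,m,m_2)$ it relates are equal. Adding the two rank counts then gives $2r_a(n,m,p)=g$. Since $a=g-r_a(n,m,p)$ by Proposition~\ref{prop:cong_F_nm}, this yields $a=g/2$.

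\textbf{Step 1: set-up.} Let $M=\lcm(n,m)$. Since $n,m$ divide $p^2+1$, we have $p^2\equiv -1 \pmod M$, and $\gcd(p,M)=1$. Put $m_1=p \bmod M$ and $m_2=M-m_1$, so $m_2\equiv -p \pmod M$. Theorem~\ref{thm:m1+m2} gives
\[
r_a(n,m,m_1)+r_a(n,m,m_2)=g(\Fnm^{(p)}).
\]
By Theorem~\ref{thm:p_mod_m} with $u=p$, we have $r_a(n,m,p)=r_a(n,m,m_1)$, since $p=m_1+tM$ with $m_1\in\{1,\dots,M-1\}$. (If $M\le 2$ the problem is trivial and I would treat it separately.)

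\textbf{Step 2: relate $m_2$ to $m_1$.} The key observation is that $p\cdot p\equiv -1 \pmod M$, so $p^{-1}\equiv -p\equiv m_2 \pmod M$. Hence $m_2=m_1^{-1} \bmod M$, and we apply Theorem~\ref{thm:p_inv} with $u=m_1$, so that $u^{-1}=m_2$. That theorem gives, for each admissible pair $(i,j)$ counted by $r_a(n,m,m_1)$, a uniquely determined pair $(\bar i,\bar j)$ with $m\bar i+n\bar j<nm$ counted by $r_a(n,m,m_2)$. The map $(i,j)\mapsto(\bar i,\bar j)$ is injective, as shown at the end of that proof. The symmetric argument, swapping the roles of $u$ and $u^{-1}$, gives an injection in the other direction. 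Therefore $r_a(n,m,m_1)=r_a(n,m,m_2)$.

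\textbf{Step 3: conclude.} Combining the two steps,
\[
r_a(n,m,p)=r_a(n,m,m_1)=g/2,
\]
so
\[
a(\Fnm^{(p)})=g-r_a(n,m,p)=g/2.
\]

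\textbf{Main obstacle.} The delicate point is Step~2. Theorem~\ref{thm:p_inv} is stated as an equivalence of solvability "for some uniquely determined" $(\bar i,\bar j)$. I must make sure this really yields a bijection between the two counted sets, not just an injection in one direction. I would check that composing the map for $u$ with the map for $u^{-1}$ returns the original pair, or simply use injectivity in both directions on these finite sets.

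A secondary point is that Theorem~\ref{thm:p_inv} and Theorem~\ref{thm:m1+m2} are invoked with $u=m_1,m_2$, which need not be prime. The definition of $r_a$ and Theorems~\ref{thm:p_mod_m} and~\ref{thm:p_inv} only require $\gcd(M,u)=1$, so this is fine. Theorem~\ref{thm:m1+m2} is already stated in terms of $r_a(n,m,m_i)$.
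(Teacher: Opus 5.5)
Your proposal is correct and follows the paper's proof: since $p^{-1}\equiv -p \pmod M$, Theorem~\ref{thm:m1+m2} gives $r_a(n,m,p)+r_a(n,m,p^{-1})=g$, and Theorem~\ref{thm:p_inv} shows that the two summands are equal. Your use of Theorem~\ref{thm:p_mod_m} to identify $r_a(n,m,p)$ with $r_a(n,m,m_1)$, and your two-injections argument for equality of the counts, spell out steps the paper leaves implicit.
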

\begin{proof}
    Let $M=\lcm(n,m)$ and let $p^{-1}$ denote $(p^{-1} \mod M)$. Since $n,m$ divides $(q+1)$, we have that $p^2+1\equiv 0 \mod M$, so $p^{-1}\equiv-p \mod M$. In particular $p^{-1}+p\equiv 0 \mod M$, and Theorem~\ref{thm:m1+m2} 
    thus implies that $r_a(n,m,p)+r_a(n,m,p^{-1})=g(\mathcal{F}_{n,m}^{(p)})$.
    Theorem~\ref{thm:p_inv} then gives us that $r_a(n,m,p)=g(\mathcal{F}_{n,m}^{(p)})/2$, and the result follows.
\end{proof}

\section{Various results about the $a$-number of the Hurwitz function field.}
In this section, we will show that the relations between $a$-numbers of Fermat type function fields of different characteristic found in the previous section in fact also hold for Hurwitz function fields of different characteristic. We will therefore also here use the superscript notation $\mathcal{H}_n^{(p)}$ to denote the Hurwitz function field defined by $S^nT+T^n+S=0$ over a field of characteristic $p>0$. 
Recall that $N=n^2-n+1$. If we assume $\mathcal{H}_n$ is maximal, then $N|(q+1)$, in particular the Fermat function field $\mathcal{F}_N$ above is also maximal, and the first result already follows. 
\begin{corollary}
    Let $n$ be a positive integer such that $\mathcal{H}_n$ is maximal with characteristic $p_1>0$. Let $m_1=p_1 \mod N$ and $m_2=N-m_1$. 
    Then if $p_2$ is a prime such that $\mathcal{H}_n^{(p_2)}$ is maximal, and $p_2\equiv m_2 \mod N$, we have 
    \begin{align*}
        a(\mathcal{H}_n^{(p_1)}) + a(\mathcal{H}_n^{(p_2)}) = g(\mathcal{H}_n^{(p_1)}).
    \end{align*}
\end{corollary}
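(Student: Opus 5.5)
We will transport Corollary~\ref{cor:m1+m2} from $\mathcal{F}_N$ to $\mathcal{H}_n$ through the embedding $\mathcal{H}_n\subseteq\mathcal{F}_N$ given by $S=U^n/V$, $T=UV^{n-1}$. The computation in the previous section gives $\omega_{i,j}=N\cdot\bar{\omega}_{n(i-1)+j,(n-1)j-i+1}$. Since $p\nmid N$, the holomorphic differentials of $\mathcal{H}_n$ are identified with the span of the basis differentials $\bar\omega_{\alpha,\beta}$ of $\mathcal{F}_N$ whose index pair lies in the image
\[
I=\{(n(i-1)+j,\,(n-1)j-i+1): 0<i,j,\ i+j<n+1\}.
\]
The map $(i,j)\mapsto(\alpha,\beta)$ is injective, since $\alpha+n\beta \equiv 0$ and the inverse is recovered linearly. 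Hence $|I|=g(\mathcal{H}_n)$. The Cartier operator commutes with restriction to a subfield, so $\mathcal{C}$ on $\mathcal{H}_n$ is the restriction of $\mathcal{C}$ on $\mathcal{F}_N$ to this span.

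Next we use Corollary~\ref{cor:fermat_type_C(w_ij)} with $n=m=N$. In each characteristic $p_\ell$, $\mathcal{C}$ sends each $\bar\omega_{\alpha,\beta}$ either to $0$ or to a nonzero multiple of $\bar\omega_{\tilde\alpha,\tilde\beta}$, and the map $(\alpha,\beta)\mapsto(\tilde\alpha,\tilde\beta)$ is injective. Therefore the rank of $\mathcal{C}$ on $\mathcal{H}_n^{(p_\ell)}$ equals the number of $(\alpha,\beta)\in I$ with $\tilde\alpha_\ell+\tilde\beta_\ell<N$. As in the proof of Theorem~\ref{thm:m1+m2}, $\tilde\alpha_\ell= N\lceil \alpha c_\ell/N\rceil-\alpha c_\ell$ depends only on $c_\ell\equiv p_\ell^{e_\ell-1}\equiv -p_\ell^{-1}\pmod N$.

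The core of the argument is the relation $\tilde\alpha_1=N-\tilde\alpha_2$, $\tilde\beta_1=N-\tilde\beta_2$, taken verbatim from Theorem~\ref{thm:m1+m2} using $c_1+c_2\equiv 0\pmod N$. It gives $\tilde\alpha_1+\tilde\beta_1<N\iff\tilde\alpha_2+\tilde\beta_2>N$. Equality $\tilde\alpha_1+\tilde\beta_1=N$ is impossible: it would force $N\mid c_1(\alpha+\beta)$, which fails because $\gcd(c_1,N)=1$ and $0<\alpha+\beta<N$. Consequently every element of $I$ is counted in exactly one of the two ranks, so $\mathrm{rk}_1+\mathrm{rk}_2=|I|=g(\mathcal{H}_n)$.

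Since $a=g-\mathrm{rk}$ and the genus $n(n-1)/2$ does not depend on the characteristic, we get $a(\mathcal{H}_n^{(p_1)})+a(\mathcal{H}_n^{(p_2)})=2g-g=g$. The main point requiring care is the restriction step: rank counting in terms of basis elements is only valid if $\mathcal{C}$ acts as a weighted injective permutation on $I$. This holds because Corollary~\ref{cor:fermat_type_C(w_ij)} already shows that $\mathcal{C}$ is monomial and injective on the whole basis of $\mathcal{F}_N$, together with the fact that $\mathcal{C}(\mathcal{H}_n$-holomorphic$)$ stays in $\mathcal{H}_n$. No closure of $I$ under the map needs to be checked separately.
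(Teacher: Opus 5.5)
Your proposal is correct and follows the paper's route. The paper deduces this corollary by lifting the Hurwitz basis into the maximal Fermat field $\mathcal{F}_N$ via $\omega_{i,j}=N\bar\omega_{n(i-1)+j,(n-1)j-i+1}$ and then reusing the argument of Theorem~\ref{thm:m1+m2}. You make explicit what the paper leaves implicit: the numerical identity of Corollary~\ref{cor:m1+m2} for $\mathcal{F}_N$ does not suffice. What is needed is the per-pair dichotomy $\tilde\alpha_1+\tilde\beta_1<N\iff\tilde\alpha_2+\tilde\beta_2>N$, applied only to index pairs in the image set $I$. That dichotomy holds pair by pair, so it restricts to $I$.
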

For the rest of the section, we will not assume that $\mathcal{H}_n$ is maximal. 
In the article~\cite{MoS} they gave an expression for the $a$-number of the Hurwitz function field which inspires the following definition. 
\begin{defn}
    Define $r_a(n,u)$ to be the number of pairs $(i,j)$ with $0\leq i+j\leq n-2$ such that there exists a solution $(h,k)$ with $0\leq k \leq h \leq u-1$ to the system of congruences
\begin{align*}
    \begin{cases}
        nk-h+i \equiv 0 & \mod u\\
        n(h-k) + k + j + 1 \equiv 0 & \mod u.
    \end{cases}
\end{align*}
\end{defn}
In particular, it was shown that $r_a(n,p)$ equals the rank of the Cartier operator on $\mathcal{H}_n^{(p)}$.

\begin{theorem}
    Let $u$ be a positive integer with $\gcd(u,N)=1$ and $s,t$ nonnegative integers such that $u=s+tN$ and $1\leq s \leq N-1$. 
    Then for each pair $(i,j)$ with $0\leq i+j\leq n-2$ there exists a solution $(h,k)$ with $0\leq k \leq h \leq u-1$ to 
    \begin{align}\label{Hurwitz:cong1a}
        \begin{cases}
            nk-h+i \equiv 0 & \mod u\\
            n(h-k) + k + j + 1 \equiv 0 & \mod u
        \end{cases}
    \end{align}
    if and only if there exists a solution $(\bar{h}, \bar{k})$ with $0 \leq \bar{k} \leq \bar{h} \leq s-1$ to 
    \begin{align}\label{Hurwitz:cong1b}
        \begin{cases}
            n\bar{k}-\bar{h}+i \equiv 0 & \mod s\\
            n(\bar{h}-\bar{k}) + \bar{k} + j + 1 \equiv 0 & \mod s.
        \end{cases}
    \end{align}
\end{theorem}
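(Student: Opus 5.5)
The plan is to imitate the proof of Theorem~\ref{thm:p_mod_m}: write each congruence as an equality with integer quotients, keep those quotients fixed, and shift $(h,k)$ by a multiple of $t$. Suppose $(h,k)$ solves~\eqref{Hurwitz:cong1a}. Let $n_1,n_2$ be the integers with
\begin{align*}
nk-h+i=n_1u,\qquad n(h-k)+k+j+1=n_2u .
\end{align*}
Substitute $u=s+tN$ and look for $\bar h=h-tA$, $\bar k=k-tB$ satisfying the same equations with $u$ replaced by $s$ and the same $n_1,n_2$. This requires $nB-A=n_1N$ and $(n-1)A+B=n_2N$. Solving these with $N=n^2-n+1$ gives the integer solution
\begin{align*}
B=nn_1+n_2,\qquad A=(n-1)n_1+nn_2 .
\end{align*}
So the candidate is $\bar h=h-t\big((n-1)n_1+nn_2\big)$ and $\bar k=k-t(nn_1+n_2)$. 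Conversely, from a solution $(\bar h,\bar k)$ of~\eqref{Hurwitz:cong1b} with quotients $n_1,n_2$ we set $h=\bar h+tA$ and $k=\bar k+tB$. The congruences then hold automatically in both directions, and only the inequalities $0\le k\le h\le u-1$ (resp.\ $\le s-1$) remain.

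To handle the inequalities uniformly, I invert the linear system. Solving the two defining equations for $h$ and $k$ gives
\begin{align*}
Nk=(n n_1+n_2)u-(ni+j+1)+\dots,\qquad Nh=\big((n-1)n_1+nn_2\big)u-\big((n-1)i+n(j+1)\big)+\dots,
\end{align*}
where the dots stand for constants to be computed exactly. The same formulas hold for $\bar h,\bar k$ with $u$ replaced by $s$, and they agree with the shifts above. Each constraint $k\ge 0$, $h-k\ge 0$, $h\le u-1$ then takes the form $L(n_1,n_2)\,u \;\ge\; c(i,j)$. Here $L$ is one of the linear forms $nn_1+n_2$, $(n-1)n_2-n_1$, $N-(n-1)n_1-nn_2$, and $c(i,j)$ is a constant lying strictly between $0$ and $N$ in absolute value. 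This is where the hypothesis $0\le i+j\le n-2$ enters, playing the role that $mi+nj<mn$ played before.

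The key step, and the main obstacle, is showing that each such inequality holds for $u$ if and only if it holds for $s$. First, the left side $L(n_1,n_2)u - c(i,j)$ is $N$ times an integer. Second, $|c(i,j)|<N$. Together these force $L(n_1,n_2)u - c(i,j)\ge 0$ to be equivalent to $L(n_1,n_2)\ge 0$ (or to $L(n_1,n_2)\ge 1$, depending on the sign of $c$). For example, $k\ge 0$ should reduce to $nn_1+n_2\ge 0$ or $\ge 1$, and the statement in that form no longer involves $u$. Hence it transfers verbatim to $s$, and back again.

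I expect most of the work to be in pinning down the exact constants $c(i,j)$ and checking the strict bounds $|c(i,j)|<N$ in each of the three inequalities. If one of them fails to be strictly bounded, I would fall back on the direct style of Theorem~\ref{thm:p_mod_m}. There, one expresses $\bar h$ as $-1+s\frac{h+1}{u}+(\text{small term})$ and bounds it using $h+1\le u$ together with $tN\le u$.
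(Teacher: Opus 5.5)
Your proposal is correct. The shift is exactly the paper's: $\bar h=h-t\bigl((n-1)n_1+nn_2\bigr)=h+n_1t-nt(n_1+n_2)$ and $\bar k=k-t(nn_1+n_2)$, with the same inverse shift for the converse.

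There is one slip. Since $n(h-k)+k=nh-(n-1)k$, the second condition on the shift is $nA-(n-1)B=n_2N$, not $(n-1)A+B=n_2N$. The $A,B$ you write down do solve the correct system.

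Where you depart from the paper is in checking the inequalities. The paper verifies $\bar k\ge0$, $\bar k\le\bar h$, $\bar h\le s-1$, and then $k\le h\le u-1$, one at a time by separate estimates. It uses divisibility by $n$ for $\bar k\ge 0$, and the identities \eqref{eq:nn_2u-(n_1+n_2)u} and \eqref{eq:n_1u-n(n1+n2)u} together with $tN<u$ and integrality for the rest. You instead invert the system once. Your formulas are already exact, so the dots are zero. For $x=u$ with $(h,k)$, and equally for $x=s$ with $(\bar h,\bar k)$,
\begin{align*}
Nk=L_1x-c_1,\qquad N(h-k)=L_2x-c_2,\qquad N(x-1-h)=L_3x-c_3,
\end{align*}
where $L_1,L_2,L_3$ are your three linear forms and $c_1=ni+j+1$, $c_2=(n-1)(j+1)-i$, $c_3=N-(n-1)i-n(j+1)$. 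From $0\le i+j\le n-2$ you get $1\le c_\ell\le (n-1)^2<N$ in all three cases.

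Then for any integer $x\ge 1$ with $Lx-c\in N\mathbb{Z}$, you have $Lx-c\ge 0\iff L\ge 1$. If $L\ge 1$ then $Lx-c>-N$, and if $L\le 0$ then $Lx-c\le -c<0$. This settles both directions at once, so no fallback is needed.

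Do drop the hedge about the sign of $c$. It is positivity of $c$, not just $|c|<N$, that makes the criterion work. For $c<0$, the choice $L=-1$, $s=|c|$ gives $Ls-c=0$ but $Lu-c=-tN<0$, so the transfer would fail.

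As for what each route buys: the paper's argument parallels the proof of Theorem~\ref{thm:p_mod_m} and produces explicit formulas such as $\bar h=\frac{hs+it-nt(i+j+1)}{u}$. However, it needs separate estimates in each direction. Its $\bar k\ge 0$ step also asserts $n_1s+\bar h-i\ge -i$, which presupposes $n_1s+\bar h\ge 0$; that is not yet known at that point, since $\bar h\ge 0$ is only obtained afterwards. Your sign criterion is shorter and symmetric in $u$ and $s$. It pins down the single place where the hypothesis $0\le i+j\le n-2$ enters, and it gives $\bar k\ge 0$ directly from $nn_1+n_2\ge 1$.
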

\begin{proof}
    We will first assume that there is a solution to the system~\eqref{Hurwitz:cong1a} for some $(i,j)$, so define $n_1,n_2$ such that $nk-h+i=n_1u$ and $n(h-k) + k + j + 1 =n_2u$. 
    Then $nk-h+i-n_1tN=n_1s$ and $n(h-k) + k + j + 1 - n_2tN=n_2s$, and we consider 
    \begin{align*}
        \begin{cases}
            n\bar{k}-\bar{h} &= nk-h-n_1tN,\\
            n(\bar{h}-\bar{k}) + \bar{k} &= n(h-k) + k - n_2tN,
        \end{cases}
    \end{align*}
    which has the solution
    \begin{align*}
        \bar{h} = h+n_1t - nt(n_1+n_2) \text{ and }
        \bar{k} = k-nn_1t-n_2t.
    \end{align*}
    We need to show that $0\leq \bar{k} \leq \bar{h} \leq s-1$, so note first that $n\bar{k}=n_1s+\bar{h}-i$, so $n$ divides $n_1s+\bar{h}-i$. Thus if $n_1s+\bar{h}-i>-n$, then $\bar{k}\geq 0$ follows, and indeed we have $n_1s+\bar{h}-i\geq -i>-n$. For the bound $\bar{k}\leq \bar{h}$, we get that    
    \begin{align*}
        \bar{h}-\bar{k} = h-k + (n_1+n_2)t-nn_2t,
    \end{align*}
    so we will show that $nn_2t-(n_1+n_2)t \leq h-k$, and then $\bar{h}-\bar{k}\geq 0$ follows.
    A series of computations show that 
    \begin{align}\label{eq:(n1+n2)u}
        (n_1+n_2)u = nh-h+k+i+j+1,
    \end{align}
    and 
    \begin{align}\label{eq:nn_2u-(n_1+n_2)u}
        nn_2u-(n_1+n_2)u 
        &= N(h-k)+nj + (n-i-j-1). 
    \end{align}
    Thus, 
    \begin{align*}
        nn_2t-(n_1+n_2)t
        &=\frac{Nt(h-k)+ntj + t(n-i-j-1)}{u}\\
        &= h-k + \frac{-s(h-k)+ntj + t(n-i-j-1)}{u}\\
        &< h-k + \frac{tN}{u}
    \end{align*}
    However, since $\frac{tN}{u}<1$, and all the values are integer, we may conclude that $nn_2t-(n_1+n_2)t \leq h-k$, and $\bar{h}-\bar{k}\geq 0$. Now, in order to show that $\bar{h}\leq s-1$, we first need
    \begin{align}\label{eq:n_1u-n(n1+n2)u}
        n_1u-n(n_1+n_2)u = -Nh+i-n(i+j+1).
    \end{align}
    Then we immediately get
    \begin{align*}
        \bar{h} 
        &= \frac{hu-Nth+it-nt(i+j+1)}{u}
        = \frac{hs+it-nt(i+j+1)}{u}\\
        &< \frac{us+it-nt(i+j+1)}{u}
        \leq s -\frac{nt(j+1)}{u}
        \leq s. 
    \end{align*}
    Now, we assume that there exists a solution $(\bar{h},\bar{k})$ to the system~\eqref{Hurwitz:cong1b} for some $(i,j)$. Let therefore $n_1,n_2$ be defined such that $n\bar{k}-\bar{h}+i=n_1s$ and $n(\bar{h}-\bar{k}) + \bar{k} + j + 1 =n_2s$. We see 
    that $n\bar{k}-\bar{h}+i+n_1tN=n_1u$ and $n(\bar{h}-\bar{k}) + \bar{k} + j + 1 + n_2tN =n_2s$, and then we have the solution 
    \begin{align*}
        h = \bar{h} + nt(n_1+n_2)-n_1t \text{ and } k = \bar{k}+nn_1t+n_2t.
    \end{align*}
    Clearly $k\geq 0$, so we just need to show $k \leq h \leq u-1$. We see that
    \begin{align*}
        h-k = \bar{h}-\bar{k} + ntn_2 -(n_1+n_2)t,
    \end{align*}
    so $k \leq h$ will follow from showing $(n_1+n_2)t - ntn_2 \leq \bar{h}-\bar{k}$. We can use the expression from Equation~\eqref{eq:nn_2u-(n_1+n_2)u} with the new solution, and we get
    \begin{align*}
        (n_1+n_2)t - ntn_2 
        &= \frac{-Nt(\bar{h}-\bar{k})-ntj-t(n-i-j-1)}{s}\\
        &= \bar{h}-\bar{k}-\frac{u(\bar{h}-\bar{k})+ntj+t(n-i-j-1)}{s}\\
        &\leq \bar{h}-\bar{k}.
    \end{align*}
    Finally, we use the expression from Equation~\eqref{eq:n_1u-n(n1+n2)u} with the new solution and get
    \begin{align*}
        h 
        &= \frac{\bar{h}s+Nt\bar{h}-it+nt(i+j+1)}{s}
        = \frac{\bar{h}u-it+nt(i+j+1)}{s}\\
        &\leq u + \frac{-u-it+nt(n-1)}{s}
        < u - \frac{u-Nt}{s} = u-1. 
    \end{align*}
    This concludes the proof.
\end{proof}
Like for the Fermat type function field, we have a similar result relating the solutions to the system of congruences modulo $u$ and modulo its inverse. 
\begin{theorem}
    Let $u^{-1}$ denote $(u^{-1}\mod N)$ and define $t$ such that $uu^{-1}=1+tN$. 
    Then for each pair of integers $(i,j)$ with $0\leq i+j\leq n-2$ there exists a solution $(h,k)$ with $0\leq k \leq h \leq u-1$ to 
    \begin{align}\label{Hurwitz:cong2a}
        \begin{cases}
            nk-h+i \equiv 0 & \mod u\\
            n(h-k) + k + j + 1 \equiv 0 & \mod u
        \end{cases}
    \end{align}
    if and only if there exists a uniquely determined pair of integers $(\bar{i},\bar{j)}$ with $0\leq \bar{i}+\bar{j}\leq n-2$ and a solution $(\bar{h}, \bar{k})$ with $0 \leq \bar{k} \leq \bar{h} \leq u^{-1}-1$ to 
    \begin{align}\label{Hurwitz:cong2b}
        \begin{cases}
            n\bar{k}-\bar{h}+\bar{i} \equiv 0 & \mod u^{-1}\\
            n(\bar{h}-\bar{k}) + \bar{k} + \bar{j} + 1 \equiv 0 & \mod u^{-1}.
        \end{cases}
    \end{align}
\end{theorem}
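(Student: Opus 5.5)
The plan is to mirror the proof of Theorem~\ref{thm:p_inv}, replacing the pair of decoupled congruences by the coupled linear system of the Hurwitz case. Throughout I use the previous theorem (reduction of $u$ modulo $N$) to assume $1\leq u\leq N-1$ wherever convenient. The pair $(u,u^{-1})$ is then symmetric: both lie in $\{1,\dots,N-1\}$, and $uu^{-1}=1+tN$ is the same equation seen from either side. It therefore suffices to prove one implication together with injectivity of the assignment $(i,j)\mapsto(\bar{i},\bar{j})$.

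\textbf{Construction.} Given a solution $(h,k)$ of~\eqref{Hurwitz:cong2a}, define $n_1,n_2$ by $nk-h+i=n_1u$ and $n(h-k)+k+j+1=n_2u$. I will set $\bar{i}=n_1$ and $\bar{j}=n_2-1$. I then look for $(\bar{h},\bar{k})$ with
\begin{align*}
n\bar{k}-\bar{h}+\bar{i}=iu^{-1},\qquad n(\bar{h}-\bar{k})+\bar{k}+\bar{j}+1=(j+1)u^{-1}.
\end{align*}
Substituting $n_1u^{-1}u=n_1(1+tN)$, these become
\begin{align*}
n\bar{k}-\bar{h}=-(nk-h)u^{-1}+n_1tN,\qquad n(\bar{h}-\bar{k})+\bar{k}=-(n(h-k)+k)u^{-1}+n_2tN.
\end{align*}
The linear map $(h,k)\mapsto(nk-h,\,n(h-k)+k)$ has determinant $-N$, so its adjugate over $N$ inverts it. Since the right-hand sides are the image of $-u^{-1}(h,k)$ plus $tN(n_1,n_2)$, the solution is integral:
\begin{align*}
\bar{h}=-hu^{-1}+t(\text{integer combination of } n_1,n_2),\qquad \bar{k}=-ku^{-1}+t(\text{integer combination of } n_1,n_2).
\end{align*}
These combinations are the ones appearing in the previous proof, namely $nt(n_1+n_2)-n_1t$ and $nn_1t+n_2t$, up to sign conventions.

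\textbf{Bounds.} I would check them in this order.
\begin{itemize}
\item $0\leq\bar{i}+\bar{j}\leq n-2$. Use~\eqref{eq:(n1+n2)u}: $(n_1+n_2)u=(n-1)h+k+i+j+1$. This gives $n_1+n_2\geq 1$. The upper bound $n_1+n_2\leq n-1$ should follow from $k\leq h\leq u-1$ and $i+j\leq n-2$ (so $i+j+1\leq n-1$), via $(n-1)h+k\leq n(u-1)$. Nonnegativity of $n_1$ and of $n_2-1$ individually needs a short separate argument, e.g.\ $n_2u\geq j+1>0$.
\item $\bar{k}\geq0$. Use divisibility: $n\bar{k}$ differs from $\bar{h}$ by a quantity larger than $-n$, as in the previous proof.
\item $\bar{k}\leq\bar{h}$. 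Rewrite $(h-k)u^{-1}$ using~\eqref{eq:nn_2u-(n_1+n_2)u} multiplied by $u^{-1}$. After substituting $uu^{-1}=1+tN$, this should leave a correction term of absolute value less than $1$, which integrality then kills.
\item $\bar{h}\leq u^{-1}-1$. Use~\eqref{eq:n_1u-n(n1+n2)u} to write $\bar{h}$ as $u^{-1}$ minus a positive fraction, analogous to $\bar{h}=u^{-1}-\frac{h+1+it\frac{M}{n}}{u}$ in the Fermat case.
\end{itemize}

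\textbf{Uniqueness.} Show that $(i,j)\mapsto(n_1,n_2-1)$ is injective. If two pairs give the same $(n_1,n_2)$, subtract the defining equations and again apply the determinant $-N$ map. Then $(i_1-i_2,\,j_1-j_2)$ lies in the image of $\mathbb{Z}^2$ under a lattice of index $N$. Since $|i_1-i_2|,|j_1-j_2|<n$, I expect this to force equality, but this is less immediate than the "mod $n$" argument used for Fermat type.

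The main obstacle is the inequality bookkeeping for $\bar{k}\leq\bar{h}\leq u^{-1}-1$. The coupling through the determinant-$N$ matrix makes the fractional error terms messier than in Theorem~\ref{thm:p_inv}, and I would derive them directly from the three identities~\eqref{eq:(n1+n2)u}, \eqref{eq:nn_2u-(n_1+n_2)u} and~\eqref{eq:n_1u-n(n1+n2)u}. The injectivity step is the other point that may need a genuinely new idea.
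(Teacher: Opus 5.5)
Your construction and bounds follow the paper's route exactly. The paper also takes $(\bar i,\bar j)=(n_1,n_2-1)$, $\bar h=nt(n_1+n_2)-n_1t-hu^{-1}$ and $\bar k=nn_1t+n_2t-ku^{-1}$, checks the inequalities with the same three identities, and gets the converse by symmetry.

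\textbf{The gap is in your injectivity step.} The paper's proof never addresses injectivity, and your version fails as set up. Write $L(h,k)=(nk-h,\,n(h-k)+k)$, so that $(i_1-i_2,j_1-j_2)\in L\mathbb{Z}^2$. This lattice contains $L(1,1)=(n-1,1)$, so the bounds $|i_1-i_2|,|j_1-j_2|<n$ alone cannot force equality. There are two ways to repair it.

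\emph{Use the full constraint.} Use $i_a,j_a\ge 0$ and $i_a+j_a\le n-2$. Then differences lie in the hexagon $|x|,|y|,|x+y|\le n-2$, where $x^2+xy+y^2\le (n-2)^2<N$. On the other hand, a direct computation gives $x^2+xy+y^2=N(a^2-ab+b^2)\ge N$ at any nonzero $(x,y)=L(a,b)$.

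\emph{Lattice-free alternative.} Suppose two pairs give the same $(n_1,n_2)$. The two constructed $(\bar h,\bar k)$ both solve $L(\bar h,\bar k)\equiv-(n_1,n_2)\pmod{u^{-1}}$ inside $[0,u^{-1}-1]^2$. Since $\det L=-N$ is a unit modulo $u^{-1}$, they coincide. Then $i_au^{-1}=n\bar k-\bar h+n_1$ and $(j_a+1)u^{-1}=n(\bar h-\bar k)+\bar k+n_2$ give $(i_1,j_1)=(i_2,j_2)$.

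The same unit argument modulo $u$ shows that $(h,k)$, and hence $(\bar i,\bar j)$, is uniquely determined by $(i,j)$. You need this for $(i,j)\mapsto(\bar i,\bar j)$ to be a well-defined map at all.

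\textbf{A smaller ordering issue.} The relation $n\bar k=iu^{-1}+\bar h-n_1>-n$ gives $\bar k\ge 0$ only once $\bar h\ge 0$ is known. In your order that comes only after $\bar k\le\bar h$; the paper has the same circularity. Multiply your formulas for $\bar h,\bar k$ by $u$, substitute the defining equations of $n_1,n_2$, and use $uu^{-1}=1+tN$. This gives
\[
u\bar k=t(ni+j+1)-k,\qquad u(\bar h-\bar k)=t\bigl((n-1)(j+1)-i\bigr)-(h-k),\qquad u\bar h=t\bigl((n-1)i+n(j+1)\bigr)-h .
\]
From these the bounds follow directly:
\begin{itemize}
\item Since $h,k\le u-1$, $t\ge 0$ and $i\le n-2$, both $\bar k$ and $\bar h-\bar k$ are integers greater than $-1$, hence nonnegative.
\item $u\bar h\le tn(i+j+1)\le t(N-1)<uu^{-1}$, so $\bar h\le u^{-1}-1$.
\item $n_1u=nk-h+i\ge-(u-1)$, so $n_1\ge 0$. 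This is the individual bound you left open.
\end{itemize}
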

\begin{proof}
    Assume that $(h,k)$ is a solution to the system~\eqref{Hurwitz:cong2a} for $(i,j)$ and define $n_1,n_2$ such that $nk-h+i=n_1u$ and $n(h-k) + k + j + 1 =n_2u$. Then by multiplying by $u^{-1}$, we see that
    \begin{align*}
        -nku^{-1}+hu^{-1}+  n_1 + tNn_1 =iu^{-1},
    \end{align*}
    and 
    \begin{align*}
       -n(hu^{-1}-ku^{-1}) - ku^{-1} + tNn_2 + (n_2 -1) + 1 = (j+1)u^{-1}.
    \end{align*}
    Let $(\bar{i},\bar{j})=(n_1, n_2-1)$ and, with the obvious notion of $\bar{n}_1, \bar{n}_2$, let $(\bar{n}_1, \bar{n}_2)=(i,j+1)$. Before finding a solution, we will show that this choice satisfies $0\leq \bar{i} + \bar{j} \leq n-2$ or specifically that $n_1+n_2\leq n-1$. We can use the expression obtained in Equation~\eqref{eq:(n1+n2)u}, and we see
    \begin{align*}
        n_1+n_2 
        = \frac{nh-(h-k)+i+j+1}{u}
        \leq \frac{n(u-1)+i+j+1}{u}
        = n - \frac{n-1 -i-j}{u}.
    \end{align*}
    Hence $n_1+n_2\leq n-1$. Then we can consider the system of equations
    \begin{align*}
        \begin{cases}
            n\bar{k}-\bar{h} &= -nku^{-1}+hu^{-1} + tNn_1,\\
            n(\bar{h}-\bar{k}) + \bar{k} &= -n(hu^{-1}-ku^{-1}) - ku^{-1} + tNn_2
        \end{cases}
    \end{align*}
    which has solution 
    \begin{align*}
        \bar{h} = nt(n_1+n_2)-hu^{-1}-n_1t \text{ and }
        \bar{k} = nn_1t - ku^{-1} + n_2t.
    \end{align*}
    We first note that $n\bar{k} = iu^{-1}+\bar{h} - n_1$, so $n$ divides $(iu^{-1}+\bar{h} - n_1)$, and since $n_1<n$ we get $iu^{-1}+\bar{h} - n_1 > -n$, in particular $iu^{-1}+\bar{h} - n_1\geq 0$ and thus $\bar{k}\geq 0$.
    Then in order to show that $\bar{k}\leq \bar{h}$ consider
    \begin{align*}
        \bar{h}-\bar{k} = nn_2t-(h-k)u^{-1}-(n_1+n_2)t.
    \end{align*}
    We see that the bound follows if we can show that 
    \begin{align*}
        h-k \leq \frac{nn_2t-(n_1+n_2)t}{u^{-1}} = \frac{nn_2u-(n_1+n_2)u}{1+tN}t.
    \end{align*}
    We may use the expression from~\eqref{eq:nn_2u-(n_1+n_2)u} and get that indeed
    \begin{align*}
        \frac{nn_2u-(n_1+n_2)u}{1+tN}t 
        &= \frac{N(h-k)+nj + (n-i-j-1)}{1+tN}t\\
        &> \frac{(Nt+1)(h-k)-(h-k)}{1+tN}\\
        &\geq h-k - \frac{u-1}{1+tN}.
    \end{align*}
    We conclude that $\frac{nn_2u-(n_1+n_2)u}{1+tN}t\geq h-k$. For the final bound, we use~\eqref{eq:n_1u-n(n1+n2)u} and see that
    \begin{align*}
        \bar{h} 
        & =  \frac{-hu^{-1}u + Nth - it +nt(i+j+1)}{u}
        = \frac{-h-it+nt(i+j+1)}{u}\\
        &\leq \frac{tn(n-1)}{u}
        < \frac{Nt}{u}
        < u^{-1}-\frac{1}{u}.
    \end{align*}
    The other implication follows by symmetry, and this concludes the proof.
\end{proof}

Finally, we get that also for the Hurwitz function field, we have the following interesting relation between function fields of different characteristic. 
\begin{corollary}
    Let $n$ be a positive integer and $N=n^2-n+1$. Let $p, \bar{p},\tilde{p}$ be primes such that $\bar{p} \equiv p \mod N$ and $\tilde{p} \equiv p^{-1} \mod N$ and none of $p, \bar{p},\tilde{p}$ divides $N$. Then 
    \begin{align*}
        a(\mathcal{H}_n^{(p)}) = a(\mathcal{H}_n^{(\bar{p})}) = a(\mathcal{H}_n^{(\tilde{p})}).
    \end{align*}
\end{corollary}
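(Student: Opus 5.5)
The plan is to reduce the statement to the two preceding theorems on the Hurwitz congruence systems, in the same way as Corollary~\ref{cor:ppp} follows from Theorems~\ref{thm:p_mod_m} and~\ref{thm:p_inv}. By the result of~\cite{MoS} recalled above, the rank of the Cartier operator on $\mathcal{H}_n^{(p)}$ equals $r_a(n,p)$. The genus $g(\mathcal{H}_n)=\frac{n(n-1)}{2}$ does not depend on the characteristic. Hence
\begin{align*}
a(\mathcal{H}_n^{(p)}) = g(\mathcal{H}_n) - r_a(n,p),
\end{align*}
and it suffices to show $r_a(n,p)=r_a(n,\bar{p})=r_a(n,\tilde{p})$.

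First I show that $r_a(n,u)$ depends only on $u \bmod N$ whenever $\gcd(u,N)=1$. Write $u=s+tN$ with $1\le s\le N-1$. The first of the two theorems above says that for each fixed pair $(i,j)$ with $0\le i+j\le n-2$, the system~\eqref{Hurwitz:cong1a} modulo $u$ is solvable if and only if the system~\eqref{Hurwitz:cong1b} modulo $s$ is solvable. Counting pairs gives $r_a(n,u)=r_a(n,s)$. Since $p$ and $\bar{p}$ do not divide $N$ and $\bar{p}\equiv p \bmod N$, both have the same residue $s$. Therefore $r_a(n,p)=r_a(n,s)=r_a(n,\bar{p})$.

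Next I use the inverse theorem. It gives a map $(i,j)\mapsto(\bar{i},\bar{j})$ sending pairs counted by $r_a(n,u)$ to pairs counted by $r_a(n,u^{-1})$, where $u^{-1}$ denotes $(u^{-1}\bmod N)$. The proof of that theorem also supplies injectivity, since $(\bar{i},\bar{j})=(n_1,n_2-1)$ determines $(i,j)$ back via $(\bar n_1,\bar n_2)=(i,j+1)$. The symmetric construction from $u^{-1}$ back to $u$ then yields $r_a(n,u)\le r_a(n,u^{-1})\le r_a(n,u)$, so the two counts are equal. Apply this with $u=p$. Then $u^{-1}\equiv\tilde{p}\bmod N$, so by the previous step $r_a(n,\tilde p)=r_a(n,p^{-1}\bmod N)=r_a(n,p)$.

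The main obstacle is this bijection step. The theorem is phrased as an ``if and only if'' with a uniquely determined $(\bar i,\bar j)$. To turn it into an equality of counts, one must check that the correspondence is injective in both directions, i.e.\ that distinct $(i,j)$ cannot give the same $(n_1,n_2)$. If the solution $(h,k)$ is not unique, this could a priori fail. I would handle it as in the proof of Theorem~\ref{thm:p_inv}. From $nk-h+i=n_1u$ and $n(h-k)+k+j+1=n_2u$, the pair $(n_1,n_2)$ together with $u$ recovers $(i,j)$, because $(\bar i,\bar j)$ reversed under $u^{-1}$ returns $(n_1',n_2')=(i,j+1)$. So the two maps are mutually inverse on the counted sets.
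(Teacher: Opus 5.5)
Your reduction is the one the paper intends. The corollary is stated as an immediate consequence of the two preceding theorems via $a(\mathcal{H}_n^{(p)})=g(\mathcal{H}_n)-r_a(n,p)$, just as Corollary~\ref{cor:ppp} follows from Theorems~\ref{thm:p_mod_m} and~\ref{thm:p_inv}. Your first step, $r_a(n,u)=r_a(n,s)$ by counting the pairwise equivalence, is fine.

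The step you argue beyond what the paper writes out is the injectivity of $(i,j)\mapsto(\bar i,\bar j)=(n_1,n_2-1)$, and your justification is circular. You only know that reversing under $u^{-1}$ returns $(i,j+1)$ for the particular solution $(\bar h,\bar k)$ built from $(h,k)$. Suppose $(i,j)\neq(i',j')$, with solutions $(h,k)$ and $(h',k')$, produced the same $(n_1,n_2)$. Then you would get two different solutions of the same $u^{-1}$-system, each reversing to its own pair, and no contradiction arises. Unlike the Fermat-type case, a single equation modulo $n$ does not settle this, because $nk-h+i=n_1u$ involves $h$.

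The repair is one line. Add $n$ times the first equation to the second:
\begin{align*}
(nn_1+n_2)\,u = Nk + (ni+j+1), \qquad 1\le ni+j+1\le (n-1)^2 < N .
\end{align*}
So $(n_1,n_2)$ determines $k$ and $ni+j+1$ by division with remainder by $N$. Since $0\le j\le n-2$, this determines $(i,j)$, whichever solution $(h,k)$ was chosen. Equivalently, $n\bar i+\bar j+1\equiv u^{-1}(ni+j+1)\pmod N$, which shows directly that the correspondence is well defined and injective.

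There is also a small imprecision in the symmetric direction. Starting from $u^{-1}$, the construction lands at its inverse modulo $N$, namely $s=u\bmod N$, not at $u=p$ itself when $p>N$. So either apply the inverse theorem to $s$, or invoke your first step again to get $r_a(n,u^{-1})\le r_a(n,s)=r_a(n,u)$. With these two adjustments the argument is complete.
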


As a consequence, the result obtained for maximal Fermat type function fields in Corollary~\ref{cor:fermat-type_a=g/2} also holds for maximal Hurwitz function fields. 
\begin{corollary}
    Let $p$ be a prime, $q=p^2$ and $n$ a nonnegative integer such that $n^2-n+1$ divides $q+1$ so that $\mathcal{H}_n$ is maximal. Then $a(\mathcal{H}_n)=g(\mathcal{H}_n)/2$.
\end{corollary}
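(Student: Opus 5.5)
Mirroring the proof of Corollary~\ref{cor:fermat-type_a=g/2}, I will combine a Hurwitz analogue of Theorem~\ref{thm:m1+m2} with the preceding inverse theorem for the Hurwitz congruences. Since $N\mid q+1=p^2+1$, we have $p^2\equiv -1 \mod N$. Hence $p^{-1}\equiv -p \mod N$, that is, $p+p^{-1}\equiv 0\mod N$.

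\textbf{Step 1 (complementarity).} With $m_1=p\mod N$ and $m_2=N-m_1\equiv p^{-1}\mod N$, I want to show $r_a(n,m_1)+r_a(n,m_2)=g(\mathcal{H}_n)$. The first corollary of this section gives $a(\mathcal{H}_n^{(p)})+a(\mathcal{H}_n^{(p_2)})=g$ for a prime $p_2\equiv m_2 \mod N$ with $\mathcal{H}_n^{(p_2)}$ maximal. To avoid needing such a prime, I will instead argue directly on the residue level, via the lift $\omega_{i,j}=N\bar\omega_{n(i-1)+j,(n-1)j-i+1}$ to $\mathcal{F}_N$ and the complementarity $\ti_1=N-\ti_2$, $\jt_1=N-\jt_2$ established in the proof of Theorem~\ref{thm:m1+m2} for $n=m=N$. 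The main task is to check that the rank of $\mathcal{C}$ on $\mathcal{H}_n^{(p)}$ equals $r_a(n,p)$, and that the Cartier image of a Hurwitz differential stays in the Hurwitz subspace, i.e.\ in the image of the lift. This should follow because $\mathcal{C}$ commutes with the inclusion $\mathcal{H}_n\subset\mathcal{F}_N$: holomorphic differentials of $\mathcal{H}_n$ map to differentials of $\mathcal{H}_n$. The count is then the number of lifted index pairs satisfying the holomorphy bound for residue $m_1$, and its complement is the count for residue $m_2$. The parameter $e$ enters only through $p^{e-1}\equiv m_2^{-1}$, so these counts depend only on the residues, consistent with the two reduction theorems above.

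\textbf{Step 2 (inverse symmetry).} By the preceding theorem (the Hurwitz analogue of Theorem~\ref{thm:p_inv}), the map $(i,j)\mapsto(\bar i,\bar j)$ is injective, and by symmetry in $u$ and $u^{-1}$ it is a bijection between the pairs counted by $r_a(n,u)$ and those counted by $r_a(n,u^{-1})$. So $r_a(n,p)=r_a(n,p^{-1}\bmod N)$. Together with the reduction theorem ($r_a(n,u)=r_a(n,s)$ for $u\equiv s\mod N$), this gives $r_a(n,p)=r_a(n,m_1)=r_a(n,m_2)$.

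\textbf{Step 3.} Combining the two steps, $2r_a(n,p)=g(\mathcal{H}_n)$. Since $a(\mathcal{H}_n)=g-r_a(n,p)$, we get $a=g/2$.

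The main obstacle is Step 1, namely justifying the complementarity $r_a(n,m_1)+r_a(n,m_2)=g$ for Hurwitz without assuming a second maximal prime. I would first try to read it off from the Fermat case by restricting the involution $(\ti,\jt)\mapsto(N-\ti,N-\jt)$ to the lifted index set. One can check directly that the Hurwitz lift indices $(n(i-1)+j,(n-1)j-i+1)$ are exactly the Fermat indices $(a,b)$ satisfying the linear congruence $a\equiv nb \cdot(\text{const})\mod N$ that defines the invariants of the order-$N$ automorphism group. That congruence is preserved under negation and multiplication by $p^{e-1}$, so the involution restricts correctly.
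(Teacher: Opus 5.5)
\textbf{Gap in Step 1: the complement count is never identified with $r_a(n,m_2)$.}

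Your overall architecture is what the paper intends. The paper gives no separate proof; it presents the corollary as a consequence of the two Hurwitz corollaries, mirroring Corollary~\ref{cor:fermat-type_a=g/2}. Steps 2 and 3 are fine. The gap is in Step 1, exactly where you try to do without a second prime.

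Let $L$ be the set of lifted indices $(a,b)=(n(i-1)+j,(n-1)j-i+1)$, and let $R\subseteq L$ be the pairs with $\tilde a_1+\tilde b_1<N$ (multiplier $p^{e-1}$). Your residue-level argument establishes two things. First, the Cartier rank on $\mathcal{H}_n^{(p)}$ is $|R|$, so $|R|=r_a(n,p)=r_a(n,m_1)$. Second, for each $(a,b)$ exactly one of $\tilde a_1+\tilde b_1<N$ and $\tilde a_2+\tilde b_2<N$ holds.

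Hence your ``count for residue $m_2$'' is simply $|L\setminus R|$, and $|R|+|L\setminus R|=g$ is a tautology. Restricting the involution to $L$ is automatic, since the complementarity is pointwise; that is not where the difficulty lies.

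The whole content of the corollary is $|L\setminus R|=|R|$. Your route to it needs $|L\setminus R|=r_a(n,m_2)$, and this is never justified. $r_a(n,m_2)$ counts solutions of a congruence system. The only bridge between such a count and a tilde-count is the Cartier operator of a maximal Hurwitz field in an actual characteristic $p_2\equiv m_2\pmod N$:
\begin{itemize}
\item the rank equals $r_a(n,p_2)$ by the rank formula for primes;
\item the rank equals $|L\setminus R|$ by Corollary~\ref{cor:fermat_type_C(w_ij)};
\item $r_a(n,p_2)=r_a(n,m_2)$ by the reduction theorem.
\end{itemize}
Saying that both quantities ``depend only on residues'' does not make them equal.

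\textbf{Two repairs.}

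\emph{Use the second prime after all.} The prime you wanted to avoid always exists. By Dirichlet's theorem, choose a prime $p_2\equiv -p\pmod N$. Then $p_2^2\equiv p^2\equiv -1\pmod N$, so $\mathcal{H}_n^{(p_2)}$ is maximal over $\mathbb{F}_{p_2^4}$. The bridge above (equivalently, the first corollary of this section) then gives $r_a(n,m_2)=|L\setminus R|$, and your Step 2 finishes the proof. This is the paper's implicit argument.

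\emph{Bypass $r_a$ and Step 2 entirely.} This version is more self-contained. For $e=2$, one has $\tilde a_1=N\lceil pa/N\rceil-pa=\langle -pa\rangle_N$, where $\langle x\rangle_N$ is the least nonnegative residue. Since the sum is never $\equiv 0\pmod N$, this gives
$R=\{(a,b)\in L:\langle -pa\rangle_N+\langle -pb\rangle_N<N\}$ and $L\setminus R=\{(a,b)\in L:\langle pa\rangle_N+\langle pb\rangle_N<N\}$.

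Your description of $L$ is correct: it is exactly the pairs with $a,b\ge 1$, $a+b<N$ and $a+nb\equiv 0\pmod N$ (indeed $a+nb=Nj$). Combine this with $p^2\equiv -1\pmod N$. Then $(a,b)\mapsto(\langle -pa\rangle_N,\langle -pb\rangle_N)$ maps $R$ into $L\setminus R$. Likewise $(a,b)\mapsto(\langle pa\rangle_N,\langle pb\rangle_N)$ maps $L\setminus R$ into $R$, and the two maps are mutually inverse.

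Hence $|R|=|L\setminus R|=g/2$. In words, $\mathrm{Im}\,\mathcal{C}=\ker\mathcal{C}$ on the Hurwitz differentials. This sharpens the inclusion $\mathrm{Im}\,\mathcal{C}\subseteq\ker\mathcal{C}$ that comes from $\mathcal{C}^2=0$. This is also where your stability observation about $L$ is genuinely needed.
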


\section*{Acknowledgements}
The author is deeply grateful to Peter Beelen and Maria Montanucci for their useful input and helpful discussions. 
This work has been supported by Villum Fonden under Grant VIL52303.

\section*{AI tool disclosure}
AI assistance was used for literature search. The ideas, proofs and text is human generated.


\end{document}